\newtheorem{assumption}[theorem]{Assumption}
\newcommand{\N}{\mathbb{N}}
\renewcommand{\Re}{\mathbb{R}} 
\renewcommand{\hat}{\widehat}
\def\bb{\mathbf b}
\def\bd{\mathbf d}
\def\bg{\mathbf g}
\def\bp{\mathbf p}
\def\br{\mathbf r}
\def\bs{\mathbf s}
\def\bu{\mathbf u}
\def\bv{\mathbf v}
\def\bu{\mathbf u}
\def\bx{\mathbf x}
\def\by{\mathbf y}
\def\bz{\mathbf z}
\def\bzero{\mathbf 0}
\def\bone{\mathbf 1}
\def\bbeta{\boldsymbol{\beta}}
\def\bmu{\boldsymbol{\mu}}
\def\bnu{\boldsymbol{\nu}}
\def\bphi{\boldsymbol{\varphi}}
\def\brho{\boldsymbol{\rho}}
\def\mA{\mathcal{A}}
\def\mD{\mathcal{D}}
\def\mF{\mathcal{F}}
\def\mL{\mathcal{L}}
\def\mT{\mathcal{T}}
\DeclareMathOperator*{\argmin}{argmin}
\DeclareMathOperator*{\sign}{sign}
\DeclareMathOperator*{\pr}{proj}
\def\st{\mbox{s.t.}}
\def\diag{\mbox{diag}}
\title{A subspace-accelerated split Bregman method \\ for sparse data recovery with joint $\ell_1$-type regularizers \thanks{This work was partially supported by Gruppo Nazionale per il Calcolo Scientifico - Istituto Nazionale di Alta Matematica (GNCS-INdAM) and by the VAIN-HOPES Project, funded by the 2019 V:ALERE (VAnviteLli pEr la RicErca) Program of the University of Campania ``L. Vanvitelli''.}}
\author{
Valentina De Simone\footnotemark[2] \and Daniela di Serafino\footnotemark[2] \and Marco Viola\footnotemark[2]}
\shorttitle{A subspace-accelerated split Bregman method} 
\date{March 23, 2020}
\begin{document}
	
	\maketitle
    \centerline{\footnotesize VERSION 2 -- Mar 23, 2020}
    	
	\renewcommand{\thefootnote}{\fnsymbol{footnote}}
	
	\footnotetext[2]{Department of Mathematics and Physics, University of Campania ``L. Vanvitelli'', viale A. Lincoln, 5, Caserta (Italy), \texttt{\{valentina.desimone, daniela.diserafino, marco.viola\}@unicampania.it}}

	\begin{abstract}
		We propose a subspace-accelerated Bregman method for the linearly constrained minimization of functions of the form
		$ f(\bu) + \tau_1\,\|\bu\|_1 + \tau_2\,\|D\,\bu\|_1 $, where $f$ is a smooth convex function and $D$ represents a linear operator,
		e.g.~a finite difference operator, as in anisotropic Total Variation and fused-lasso regularizations. Problems of this type arise in a wide
		variety of applications, including portfolio optimization, learning of predictive models from functional Magnetic Resonance Imaging (fMRI) data, and source detection problems in electroencephalography. The use of $\|D\,\bu\|_1$
		is aimed at encouraging structured sparsity in the solution. The subspaces where the acceleration is performed are selected so that the restriction
		of the objective function is a smooth function in a neighborhood of the current iterate. Numerical experiments on multi-period portfolio
		selection problems using real datasets show the effectiveness of the proposed method.
	\end{abstract}
	
	\begin{keywords}
		split Bregman method, subspace acceleration, joint $\ell_1$-type regularizers, multi-period portfolio optimization.
	\end{keywords}
	
	\begin{AMS}
		65K05, 90C25.
	\end{AMS}
	
\section{Introduction\label{sec:intro}}
	
We are interested in the solution of problems of the form
\begin{equation} \label{TVl1reg_problem}
	\begin{array}{rl}
	\min & \displaystyle f(\bu) + \tau_1\,\|\bu\|_1 + \tau_2\, \|D\,\bu\|_1\\
	\st & \displaystyle A\bu = \bb,
	\end{array}
\end{equation}
where $ f:\Re^n\rightarrow\Re$ is a closed convex function at least twice continuously differentiable, $\bu\in\Re^n$, $ D\in\Re^{q\times n} $, $ A\in\Re^{m\times n} $, and
$ \bb\in\Re^m $.
The $\ell_1$ regularization term in the objective function encourages sparsity in the solution while the use of $\|D\,\bu\|_1$ is aimed at incorporating further information about the solution.  For example, in the case of discrete anisoptropic Total Variation \cite{Esedoglu:2004,Goldstein:2009SB}, $D$ is a first-order finite-difference operator and the regularization encourages smoothness along certain directions. The combination of the two regularization terms can be seen as a generalization of the fused lasso regularization introduced in~\cite{Tibshirani:2005} in the case of least-squares regression. Problems of the form \eqref{TVl1reg_problem} arise, e.g., in multi-period portfolio optimization~\cite{Corsaro:2019anorFused}, in predictive modeling and classification (machine learning) on functional Magnetic Resonance Imaging (fMRI) data~\cite{Dohmatob:2014,Baldassarre:2012}, in source detection problems in electroencephalography~\cite{Becker:2017}, and in multiple change-point detection~\cite{Niu:2016}.

Methods based on Bregman iterations \cite{Bregman:1967,Osher:2005mms,Goldstein:2009SB,Campagna:2017} have proved to be efficient in the solution of this type of problems. As we will see in Section~\ref{sec:sb}, the Bregman iterative scheme requires at each step the solution of an $\ell_1$-regularized unconstrained optimization subproblem. For this minimization, which does not need to be performed exactly, but generally requires high accuracy (see Theorem~\ref{theorem:Bregman_Convergence_Eckstein} in Section~\ref{sec:sb}), one can use iterative methods suited to deal with the $\ell_1$-regularization term, such as FISTA \cite{Beck:2009FISTA}, SpaRSA \cite{Wright:2009SpaRSA}, BOSVS \cite{Chen:2013BOSVS} and ADMM~\cite{Boyd:2011admm}.
A possible drawback is that these methods may be inefficient when high accuracy is required.

Herein, we propose a subspace-acceleration strategy for the Bregman iterative scheme, which is aimed at replacing, at certain steps, the unconstrained minimization of the $\ell_1$-regularized subproblem with the unconstrained minimization of a smooth restriction of it to a suitable subspace. The proposed strategy finds its roots in the class of orthant-based methods \cite{Andrew:2007owl,Byrd:2012MathProg,Keskar:2016OBA} for $\ell_1$-regularized minimization problems, which are based on the consecutive minimization of smooth approximations to the problem over a sequence of orthants. However, by following \cite{Robinson:2017FaRSA}, instead of considering the restriction to the full orthant, we restrict the minimization to the orthant face identified by the zero variables. Ideally, one would like to perform this subspace minimization only when there is guarantee that the subproblem solution will lie on that orthant face.  However, this is unpractical to check. For this reason, starting from the work in \cite{Robinson:2017FaRSA}, we introduce a switching criterion to decide whether to perform the subspace-acceleration step. The criterion is based on the use of some optimality measures for the current iterate with respect to the current subproblem. More specifically, it is based on a comparison between a measure of the optimality violation of the zero variables and a measure of the optimality violation of the other variables. This strategy comes from the adaptation to $\ell_1$-regularized optimization of the concept of proportional iterates, developed in the case of quadratic optimization problems subject to bound constraints or to bound constraints and a single linear equality constraint \cite{diSerafinoToraldoViolaBarlow:2018,Dostal:1997,Dostal:2005,Friedlander:1989,Friedlander:1994b,Robinson:2015}.

The idea of introducing acceleration steps over suitable subspaces to improve the performance of splitting methods for problem~\eqref{TVl1reg_problem} is not new. An example is provided, e.g., by~\cite{Chen:2011jcam}. However, the strategy we propose in this work differs from that subspace-acceleration strategy because we focus on Bregman iterations and aim at replacing nonsmooth unconstrained subproblems with smooth smaller ones, while the algorithm in~\cite{Chen:2011jcam} is based on the introduction of a subspace-acceleration step after the minimization steps in an ADMM algorithm \cite{Boyd:2011admm}, where the subspace is spanned by directions obtained by using information from previous iterations.

This paper is organized as follows. In Section~\ref{sec:preliminaries} we recall some convex analysis concepts that will be used later in this work. In Section~\ref{sec:sb} we briefly describe the Bregman iterative scheme for the solution of problem~\eqref{TVl1reg_problem} and prove its convergence in the case of inexact subproblem minimization. In Section~\ref{sec:acceleration} we show how suitable subspace-acceleration steps can be introduced into the Bregman iterative scheme. In Section~\ref{sec:portfolio} we report numerical results for the solution of portfolio optimization problems modeled by~\eqref{TVl1reg_problem}. We provide some conclusions in Section~\ref{sec:conclusions}.

\subsection*{Notation\label{sec:notation}}
Throughout this paper scalars are denoted by lightface Roman or Greek fonts, e.g., $a, \alpha \in \Re$, vectors by boldface Roman or Greek fonts, e.g., $\bv, \bmu \in \Re^n$. The $i$-th entry of a vector $\bv \in \Re^n$ is denoted $v_i$ or $[v]_i$. Given a continuously differentiable function $F(\bx):\Re^n\rightarrow\Re$, we use $\nabla_i F(\bx)$ to indicate the first derivative of $F$ with respect to the variable $x_i$. We use $\bzero_n$ and $\bone_n$ to indicate the vectors in $\Re^n$ with all entries equal to $0$ and $1$, respectively; the subscript is omitted if the dimension is clear from the context. For any vectors $ \bu\in\Re^{n_1} $ and $ \bv\in\Re^{n_2} $ we use the notation $ [\bu;\,\bv] $ to represent the vector $ [\bu^\top,\,\bv^\top]^\top \in \Re^{n_1+n_2} $. The Euclidean scalar product between $ \bu,\;\bv\in\Re^n $ is indicated as $ \langle\bu,\bv\rangle $. Norms $\| \cdot \|$ are $\ell_2$. Superscripts are used to denote the elements of a sequence, e.g., $\left\lbrace \bx^k\right\rbrace $.
	
\section{Preliminaries\label{sec:preliminaries}}
We recall some concepts that will be used in the next sections.

\begin{definition}
Given a function $ Q:\Re^n\rightarrow\Re $, the convex conjugate $Q^*$ of $Q$ is defined as follows:
$$  Q^*(\by) = \sup\limits_{\bx} \left\langle \by,\bx\right\rangle  - Q(\bx).$$
\end{definition}
Note that $ Q^* $ is a closed convex function for any given $ Q $.  If $Q$ is strictly convex, then $ Q^* $ is also continuously differentiable;
moreover, if $Q$ is a closed convex function, then $ Q^{**}(\bx) = Q(\bx) $ \cite{HiriartUrruty:1996II}.

\begin{definition}
Given a closed convex function $ Q:\Re^n\rightarrow \Re $, a vector $ \bp\in\Re^n $ is said a subgradient of $Q$ at a point $ \bx \in \Re^n $ if
$$  Q(\by)-Q(\bx) \geq \left\langle\bp,\,\by-\bx\right\rangle,\quad \forall \, \by \in \Re^n. $$
The set of all the subgradients of $ Q $ at $ \bx $ is referred to as the subdifferential of $Q$ at $\bx$, and is denoted $ \partial Q(\bx) $.
\end{definition}

If $Q$ is a closed convex function, then \cite[Chapter X]{HiriartUrruty:1996II}
\begin{equation}\label{conjugate_subdifferential}
\bp\in\partial Q(\bx) \quad \mbox{if and only if} \quad \bx\in\partial Q^*(\bp).
\end{equation}
Moreover, we have that $ Q(\bx)+Q^*(\bp) = \langle \bp,\,\bx\rangle $.

\begin{definition}
A point-to-set map $ \Phi:\Re^n\rightarrow 2^{\Re^n} $ is said to be a monotone operator if
$$ \left\langle \bx-\by,\,\bu-\bv\right\rangle  \geq 0,\qquad\mbox{for all}\;\; \bx,\by\in\Re^n,\; \bu\in\Phi(\bx),\; \bv\in\Phi(\by).$$
Moreover, $ \Phi $ is said to be maximal monotone if it is monotone and its graph, i.e., the set 
$$\left\lbrace (\bx,\by) \in\Re^n\times\Re^n \;:\; \by \in \Psi(\bx) \right\rbrace,$$
is not strictly contained in the graph of any other monotone operator.
\end{definition}

An example of maximal monotone operator is the subdifferential of a lower-semicontinuous convex function (see \cite{Rockafellar:1976} and references therein).

\begin{definition}
Given an operator $\Phi:\Re^n\rightarrow 2^{\Re^n}$, the inverse of $\Phi$ is the operator $\Phi^{-1}:\Re^n\rightarrow 2^{\Re^n}$ defined as
$$ \Phi^{-1}(\by) = \left\lbrace \bx\in\Re^n \;:\; \by\in\Phi(\bx)\right\rbrace. $$
\end{definition}


\section{The split Bregman method\label{sec:sb}}
For the sake of simplicity and self consistency, we briefly describe the split Bregman method \cite{Goldstein:2009SB} for the solution of $\ell_1$-regularized problems of type~\eqref{TVl1reg_problem}. In order to separate the two $\ell_1$-regularization terms, we introduce the auxiliary variable $ \bd = D\bu $, so that problem~\eqref{TVl1reg_problem} can be reformulated as
\begin{equation} \label{const_l1reg_problem}
	\begin{array}{rl}
	\min & \displaystyle E(\bu,\bd) \equiv  f(\bu) + \tau_1\,\|\bu\|_1 + \tau_2\,\|\bd\|_1\\
	\mbox{s.t.} & \displaystyle A\bu \qquad = \bb,\\
		        & \displaystyle D\,\bu - \bd = \bzero.
	\end{array}
\end{equation}

The split Bregman method is based on a Bregman iterative scheme for the solution of~\eqref{const_l1reg_problem}. Letting $ \bu^0\in\Re^n$, $\bd^0\in\Re^q$,
and $\bp^0 = \left[ \bp^0_\bu ; \, \bp^0_\bd \right] \in \partial E(\bu^0,\bd^0)$, the $k$-th iteration of the Bregman method reads as follows:
\begin{eqnarray}
	\qquad\left[\bu^{k+1};\,\bd^{k+1}\right] & = &
	\argmin\limits_{\bu,\bd} \mD^{\bp^k}_E\left(\left[\bu;\,\bd\right],\, \left[\bu^k; \bd^k\right] \right) + \dfrac{\lambda}{2}\Vert A\,\bu-\bb\Vert^2 + \dfrac{\lambda}{2}\Vert D\,\bu-\bd\Vert^2,\\
	\bp^{k+1}_\bu &=& \bp^k_\bu - \lambda A^\top (A\bu^{k+1}-\bb) - \lambda D^\top (D\bu^{k+1} - \bd^{k+1}),\\[3pt]
	\bp^{k+1}_\bd &=& \bp^k_\bd - \lambda (\bd^{k+1} - D\bu^{k+1}),
\end{eqnarray}
where $ \bp^k = \left[ \bp^k_\bu ; \, \bp^k_\bd \right] $ and
\begin{equation*}
\mD^{\bar{\bp}}_E\left([\bu ; \, \bd] ,\, [\bar{\bu} ; \, \bar{\bd}]\right) = E(\bu,\bd) - E(\bar{\bu},\bar{\bd}) - \left\langle\bar{\bp}_\bu,\,\bu-\bar{\bu}\right\rangle - \left\langle\bar{\bp}_\bd,\,\bd-\bar{\bd}\right\rangle,
\end{equation*}
with $ \bar{\bp} \in \partial E(\bar{\bu},\bar{\bd})$, is the so-called Bregman distance associated with the convex function $ E $ at the point $ [ \bar{\bu} ; \, \bar{\bd}] $.
	
Following \cite{Osher:2005mms,Goldstein:2009SB}, thanks to the linearity of the equality constraints, a simplified iteration can be used in place of the original Bregman one:
\begin{eqnarray}
	\left[\bu^{k+1} ; \, \bd^{k+1}\right] & = & \argmin\limits_{\bu,\bd} E(\bu,\bd) + \dfrac{\lambda}{2}\Vert A\,\bu-\bb^k_\bu\Vert^2 + \dfrac{\lambda}{2}\Vert D\,\bu-\bd-\bb^k_\bd\Vert^2,\label{SimpBreg_UD_1}\\
	\bb^{k+1}_\bu &=& \bb^k_\bu +\bb - A\bu^{k+1},\label{SimpBreg_UD_2}\\[3pt]
	\bb^{k+1}_\bd &=& \bb^k_\bd +\bd^{k+1} - D\bu^{k+1}.\label{SimpBreg_UD_3}
\end{eqnarray}
	
In order to simplify the notation, it is convenient to rewrite \eqref{const_l1reg_problem} in terms of a single variable $\bx$ as
\begin{equation}
\begin{array}{rl}
	\min           & \displaystyle K(\bx) \equiv F(\bx) + \sum_{i=1}^{n+q}\delta_i |x_i|\\
	\st & M\,\bx = \bs,
\end{array} \label{const_l1reg_prob_unk_X}
\end{equation}
where
\begin{equation}
     \bx= \left[\begin{array}{c}\bu\\\bd\end{array}\right],\quad F(\bx) = f(\bu),\quad
     M = \left[\begin{array}{cc}A&0\\D&-I\end{array}\right], \quad \bs= \left[\begin{array}{c}\bb\\\bzero\end{array}\right], \label{const_l1reg_prob_X_definitions}
\end{equation}
and
$$
     \delta_i = \left\lbrace\begin{array}{ll}\tau_1,&\mbox{if }i\leq n,\\\tau_2,&\mbox{if }i>n.\end{array} \right.
$$
We also denote $ n_x=n+q $ the size of $ \bx $ and $ n_s = m+q $ the number of rows of $ M $ (i.e., the size of $\bs$), so that $ M\in\Re^{n_s\times n_x} $.
Then, iteration \eqref{SimpBreg_UD_1}-\eqref{SimpBreg_UD_3} can be written as
\begin{eqnarray}
	\bx^{k+1} & = & \argmin\limits_{\bx} \, K(\bx) + \dfrac{\lambda}{2}\Vert M\,\bx-\bs^k\Vert^2, \label{SimpBreg_X_1}\\
	\bs^{k+1} & = & \bs^k +\bs - M\,\bx^{k+1}, \label{SimpBreg_X_2}
\end{eqnarray}
where $ \bs^k= \left[\bb^k_\bu;\,\bb^k_\bd\right]$.
	
We can rewrite \eqref{SimpBreg_X_1}-\eqref{SimpBreg_X_2} as the augmented Lagrangian iteration
\begin{eqnarray}
	\bx^{k+1} & = & \argmin\limits_{\bx} \, K(\bx)- \left\langle\bmu^k,\,M\bx\right\rangle + \dfrac{\lambda}{2}\Vert M\,\bx-\bs\Vert^2, \label{AugLag_X_1}\\
	\bmu^{k+1} & = & \bmu^k + \lambda\left(\bs - M\,\bx^{k+1}\right), \label{AugLag_X_2}
\end{eqnarray}
where we set $ \bmu^k = \lambda(\bs^k-\bs)$ for all $ k $.
	
The following theorem, which is adapted from \cite[Theorem~3]{EcksteinBertsekas:1992dr}, provides a general convergence result for the augmented Lagrangian scheme \eqref{AugLag_X_1}-\eqref{AugLag_X_2} when the minimization in \eqref{AugLag_X_1} is performed inexactly.
	
\begin{theorem}\label{theorem:Bregman_Convergence_Eckstein}
Let $ K(\bx) $ be a closed convex function, and let $ K(\bx)+\Vert M\,\bx\Vert^2 $ be strictly convex. Let $ \bmu^0\in\Re^{n_s} $ and $ \bx^0\in\Re^{n_x}$ be arbitrary and let $ \lambda>0 $. Suppose that
\begin{itemize}
    \setlength\itemsep{6pt}
	\item[(i)] $ \left\Vert \bx^{k+1} - \argmin\limits_{\bx} \left( K(\bx)- \left\langle\bmu^k,\,M\,\bx\right\rangle + \dfrac{\lambda}{2}\Vert M\,\bx-\bs\Vert^2 \right) \right\Vert < \nu_k$,
	\item[(ii)] $ \bmu^{k+1} = \bmu^k + \lambda\left(\bs - M\,\bx^{k+1}\right) $,
\end{itemize}
where $ \nu_k\geq0 $ and $ \sum_{k=0}^{\infty}\nu_k < +\infty $.
If there exists a saddle point $ \left(\hat{\bx},\hat{\bmu}\right) $ of the Lagrangian function
$$ \mL(\bx,\bmu) = K(\bx) - \left\langle \bmu,\,M\,\bx - \bs\right\rangle, $$
then $ \bx^k\rightarrow\hat{\bx} $ and $ \bmu^k \rightarrow \hat{\bmu}$. If no such saddle point exists, then at least one of the sequences $ \{\bx^k\} $ and $ \{\bmu^k\} $ is unbounded.
\end{theorem}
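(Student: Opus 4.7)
The plan is to reduce the statement to a known convergence result for the inexact proximal point algorithm applied to a maximal monotone operator, exploiting the well-known equivalence between the augmented Lagrangian method on a primal convex program and the proximal point algorithm on its dual. First I would introduce the maximal monotone operator associated with the Lagrangian saddle-point problem, namely
\begin{equation*}
T(\bmu) = \bs - M\,(\partial K)^{-1}(M^\top \bmu),
\end{equation*}
or, equivalently, the subdifferential $T = \partial(-\varphi)$ of the (concave) dual function $\varphi(\bmu) = \inf_\bx \left\{K(\bx) - \langle \bmu, M\bx - \bs\rangle\right\}$. Saddle points of $\mL$ correspond bijectively to pairs $(\hat\bx, \hat\bmu)$ with $\hat\bmu \in T^{-1}(\bzero)$ and $\hat\bx \in (\partial K)^{-1}(M^\top \hat\bmu)$, so existence of a saddle point is equivalent to $T$ having a zero.

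Next I would establish that the \emph{exact} iteration (i.e., $\nu_k = 0$) coincides with the proximal point step $\bmu^{k+1} = (I + \lambda T)^{-1}\bmu^k$. Indeed, the first-order optimality condition for the exact minimizer $\bar\bx^{k+1}$ in (i) reads $\bzero \in \partial K(\bar\bx^{k+1}) - M^\top\bmu^k + \lambda M^\top(M\bar\bx^{k+1} - \bs)$, which, combined with the multiplier update (ii), gives $M^\top\bmu^{k+1} \in \partial K(\bar\bx^{k+1})$ and $\bs - M\bar\bx^{k+1} = (\bmu^{k+1} - \bmu^k)/\lambda$; these together are exactly $\bmu^{k+1} + \lambda T(\bmu^{k+1}) \ni \bmu^k$. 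Uniqueness of $\bar\bx^{k+1}$, needed for this identification, follows from the strict convexity of $K(\bx) + \Vert M\bx\Vert^2$, which renders the subproblem strictly convex in $\bx$.

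Then I would translate the inexactness on $\bx^{k+1}$ into an admissible error for Rockafellar's inexact proximal point theorem. Defining $\tilde\bmu^{k+1} = \bmu^k + \lambda(\bs - M\bx^{k+1})$ with the approximate $\bx^{k+1}$ of (i), one gets
\begin{equation*}
\Vert \bmu^{k+1} - (I+\lambda T)^{-1}\bmu^k \Vert = \lambda\,\Vert M(\bx^{k+1} - \bar\bx^{k+1})\Vert \leq \lambda\,\Vert M\Vert\,\nu_k,
\end{equation*}
so the summability $\sum_k \nu_k < +\infty$ implies summability of the corresponding proximal-step errors. At this point the classical result of Rockafellar \cite{Rockafellar:1976} on the inexact proximal point algorithm for a maximal monotone $T$ (the subdifferential operator above is maximal monotone by the discussion in Section~\ref{sec:preliminaries}) yields $\bmu^k \to \hat\bmu \in T^{-1}(\bzero)$ whenever a zero exists, and an unbounded multiplier sequence otherwise. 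Finally, convergence $\bx^k \to \hat\bx$ is recovered from the strict convexity hypothesis, which makes the primal minimizer a continuous single-valued function of $\bmu^{k-1}$, combined with the error bound $\nu_k \to 0$.

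The main obstacle I expect is the careful bookkeeping in the inexactness step: verifying that the error notion of Rockafellar (measured on the dual resolvent) is dominated by the primal error notion $\nu_k$ of hypothesis (i), in a way that remains summable. A secondary, but more subtle, difficulty is ensuring primal convergence in the absence of strong convexity of $K$ itself—here the assumption that $K(\bx) + \Vert M\bx\Vert^2$ is strictly convex is essential, because it both guarantees uniqueness of the primal subproblem solution and allows the recovery of $\hat\bx$ from the limit $\hat\bmu$ via the continuity of the argmin map.
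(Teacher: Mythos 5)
Your proposal is correct and follows essentially the same route as the paper's proof: the paper likewise recasts the inexact multiplier update as an inexact proximal point iteration on the dual maximal monotone operator $\Psi(\bmu)=M\,\partial K^*(M^\top\bmu)-\bs$ (your $T=\partial(-\varphi)$; note that your explicit formula $\bs-M(\partial K)^{-1}(M^\top\bmu)$ carries a spurious sign flip, though your resolvent verification uses the correct orientation), bounds the resolvent error by $\lambda\Vert M\Vert\nu_k$, and invokes the inexact proximal point convergence theorem, there in the form of \cite[Theorem~3]{EcksteinBertsekas:1992dr} rather than Rockafellar's original result. Primal convergence is then recovered in the paper exactly as you outline: strict convexity of $Z(\bx)=K(\bx)+\frac{\lambda}{2}\Vert M\,\bx-\bs\Vert^2$ makes $Z^*$ continuously differentiable, so that $\bar{\bx}^k=\nabla Z^*(M^\top\bmu^k)\rightarrow\nabla Z^*(M^\top\hat{\bmu})=\hat{\bx}$, which combined with $\nu_k\rightarrow 0$ gives $\bx^k\rightarrow\hat{\bx}$.
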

\begin{proof}
For each $ k $, let $ \bar{\bx}^k $ be the unique solution to the minimization problem in (i) (the uniqueness comes from the strict convexity of $ K(\bx)+\Vert M\,\bx\Vert^2 $).
Since $ \bar{\bx}^k $ is a stationary point, it satisfies the necessary condition
\begin{equation}\label{ConvThm_stationarity_1}
    \bzero \in \partial K(\bar{\bx}^k) - M^\top \bmu^k + \lambda\,M^\top \left(M\,\bar{\bx}^k - \bs\right).
\end{equation}
By defining $ \widetilde{\bmu}^k = \bmu^k - \lambda\left(M\,\bar{\bx}^k-\bs\right) $, condition \eqref{ConvThm_stationarity_1} can be written as
$$ M^\top \widetilde{\bmu}^k\in\partial K(\bar{\bx}^k) $$
which, by \eqref{conjugate_subdifferential}, is equivalent to
$$ \bar{\bx}^k\in \partial K^*(M^\top \widetilde{\bmu}^k).$$
Therefore,
\begin{equation}\label{ConvThm_stationarity_2}
    M\,\bar{\bx}^k - \bs \in \Psi(\widetilde{\bmu}^k),
\end{equation}
where $ \Psi(\widetilde{\bmu}^k) \equiv M\,\partial K^*(M^\top \widetilde{\bmu}^k)-\bs $.
From the definition of $ \widetilde{\bmu}^k $ and \eqref{ConvThm_stationarity_2} it follows that
$$ \widetilde{\bmu}^k = \bmu^k - \lambda\left(M\,\bar{\bx}^k-\bs\right) \in \bmu^k - \lambda\,\Psi(\widetilde{\bmu}^k),$$
that is
\begin{equation}
	\bmu^k \in \widetilde{\bmu}^k + \lambda\,\Psi(\widetilde{\bmu}^k) =
	\left(I+\lambda\,\Psi\right)(\widetilde{\bmu}^k).\label{ConvThm_mutilde1}
\end{equation}
Observe that $ \Psi(\bmu) = \partial\left(K^*(M^\top  {\bmu}) - \left\langle \bs,\bmu\right\rangle \right) $, i.e., it is the subdifferential of a closed convex function. From \cite[Corollary~31.5.2]{Rockafellar:1970convex} we have that $ \Psi $ is a maximal monotone operator. Thus, by \cite[Corollary~2.2]{EcksteinBertsekas:1992dr}, for any $ c>0 $ the operator $ J_{c\Psi} \equiv \left(I+c\,\Psi\right)^{-1} $ is single valued and has full domain. By \eqref{ConvThm_mutilde1}, we have
\begin{equation*}
	\widetilde{\bmu}^k = \left(I+\lambda\,\Psi\right)^{-1}(\bmu^k) = J_{\lambda\Psi}(\bmu^k).
\end{equation*}
Thus, by hypothesis (i) we get
\begin{equation} \label{ConvThm_error_mu}
	\left\|\bmu^{k+1} - \left(I+\lambda\,\Psi\right)^{-1}(\bmu^k) \right\| =
     \left\|\bmu^{k+1} - \widetilde{\bmu}^k \right\| \leq \lambda\|M\|\left\|\bx^{k+1} - \bar{\bx}^k \right\| < \lambda\|M\|\nu_k \equiv \beta_k,
\end{equation}
with  $ \sum_{k=0}^{\infty}\beta_k < +\infty $. By \cite[Theorem~3]{EcksteinBertsekas:1992dr} we have that the sequence $ \{\bmu^k\} $ satisfies one of the two following conditions:
\begin{itemize}
	\item[1)] if $\Psi $ has a zero, i.e., there exists a vector $ \hat{\bmu}$ such that
	$$\Psi(\hat{\bmu}) = M\,\partial K^*(M^\top \hat{\bmu})-\bs = \bzero,$$
	then $ \bmu^k\rightarrow \hat{\bmu}$;
	\item[2)] if $ \Psi $ has no zeros, then the sequence is unbounded.
\end{itemize}
	
Now we prove that in case 1) the sequence $\{ \bx^k \}$ converges to a point $ \hat{\bx} $. To this aim, we consider the minimization problem in (i). By defining $ Z(\bx)\equiv K(\bx) + \dfrac{\lambda}{2}\Vert M\,\bx-\bs\Vert^2$, which is a strictly convex function by hypothesis, we can write the stationarity condition for $ \bar{\bx}^k $ as
$$\bzero \in \partial Z(\bar{\bx}^k) - M^\top \bmu^k,$$
or equivalently as
$$\bar{\bx}^k\in\partial Z^*(M^\top {\bmu}^k).$$

The strict convexity of $ Z $ implies that $ Z^*$ is a continuously differentiable function and hence
$$\bar{\bx}^k = \nabla Z^*(M^\top {\bmu}^k),$$
which implies
$$\bar{\bx}^k \rightarrow \hat{\bx}\equiv\nabla Z^*(M^\top \hat{\bmu}).$$
This, together with $ \left\|\bx^{k+1} - \bar{\bx}^k \right\|<\nu_k\rightarrow 0 $, yields $ \bx^k\rightarrow \hat{\bx} $.
	
Now we show that the pair $\left(\hat{\bx},\hat{\bmu}\right)$ is a saddle point of the Lagrangian function $ \mL(\bx,\bmu) $, i.e., it satisfies
	\begin{itemize}
		\item[a)] $ \bzero\in \partial_\bx \mL(\hat{\bx},\hat{\bmu}) = \partial K(\hat{\bx})-M^\top \hat{\bmu}$ or, equivalently, $M^\top \hat{\bmu} \in \partial K(\hat{\bx})$;
		\item[b)] $ \bzero = \nabla_{\bmu} \mL(\hat{\bx},\hat{\bmu}) = M\,\hat{\bx}-\bs $. 
	\end{itemize}
The proof of b) follows by noting that $ M\,\bx^{k+1}-\bs = \dfrac{1}{\lambda}(\bmu^k - \bmu^{k+1})\rightarrow\bzero $.
In order to prove~a) we observe that $ \bar{\bx}^k\rightarrow\hat{\bx}$ implies $\widetilde{\bmu}^k\rightarrow \hat{\bmu}$; moreover, $  M^\top \widetilde{\bmu}^k\in\partial K(\bar{\bx}^k) $.
The thesis comes from the limit property of maximal monotone operators \cite{Brezis:1973} applied to $ \partial K$.
\end{proof}

\begin{remark}\label{remark:theorem_Bregman_Convergence_Eckstein}
Because of the equivalence between \eqref{SimpBreg_X_1}-\eqref{SimpBreg_X_2} and \eqref{AugLag_X_1}-\eqref{AugLag_X_2}, the previous theorem implies that if $ \bmu^k\rightarrow\hat{\bmu} $, then the sequence $ \{\bs^k\}$ generated in \eqref{SimpBreg_X_1}-\eqref{SimpBreg_X_2} converges to $ \hat{\bs} = \frac{1}{\lambda}\hat{\bmu}+\bs$.
\end{remark}

\section{Subspace acceleration for the split Bregman subproblems\label{sec:acceleration}}
Let us introduce, for each $ \bx\in\Re^{n_x} $, the sets
\begin{eqnarray}
    &\mA_+(\bx) = \{ i : x_i > 0 \},\qquad &\mA_-(\bx) = \{ i : x_i < 0 \},\nonumber \\
    &\mA_0(\bx) = \{ i : x_i = 0 \},\qquad &\mA_\pm(\bx) = \mA_+(\bx) \cup \mA_-(\bx).\nonumber 
\end{eqnarray}
This partitioning of the variables has been used in \cite{Solntsev:2015OMS,Robinson:2017FaRSA} to extend some ideas developed in the context of active-set methods for bound-constrained optimization \cite{Friedlander:1994b,Dostal:1997,Dostal:2005} to the case of $\ell_1$-regularized optimization. In the case of bound-constrained quadratic problems, suitable measures of optimality with respect to the \textit{active} variables (i.e., the variables that are on their bounds) and the \textit{free} variables (i.e., the variables that are not active) are used to establish whether the set of active variable is ``promising''. If this is the case, then a restricted version of the problem, obtained by fixing the active variables to their values, is solved with high accuracy. This results in very efficient algorithms in practice, able to outperform standard gradient projection schemes \cite{Robinson:2015,diSerafinoToraldoViolaBarlow:2018}. The extension of this strategy to the case of $\ell_1$-regularized optimization comes from the observation that zero and nonzero variables can play the role of active and free variables, respectively.

The results contained in this Section require a further assumption on the function $ f(\bu) $ in~\eqref{TVl1reg_problem}.
\begin{assumption}\label{assumption:Lipschitz}
	The gradient of $f$ is Lipschitz continuous with constant $L$ over $\Re^n$, i.e., for all $ \bu_1,\bu_2\in\Re^n $
	\begin{equation*}
	\|\nabla f(\bu_1) - \nabla f(\bu_2)\| \leq L \, \|\bu_1-\bu_2\|.
	\end{equation*}
\end{assumption}
Note that $ F(\bx) $ defined in \eqref{const_l1reg_prob_X_definitions} has Lipschitz continuous gradient with the same constant $L$.

In order to ease the description of our acceleration strategy, we reformulate the minimization problem in \eqref{SimpBreg_X_1} as follows:
\begin{equation}
\bx^{k+1} = \argmin\limits_{\bx} H^k(\bx) \equiv G^k(\bx) + \sum_{i=1}^{n_x}\delta_i |x_i|, \label{SimpBreg_X_subp}
\end{equation}
where
$$
     G^k(\bx) = F(\bx) + \dfrac{\lambda}{2}\Vert M\,\bx-\bs^k\Vert^2.
$$
In this way we separate the smooth part of the objective function from the $\ell_1$ regularization term.
Recall that a point $ \bx\in\Re^{n_x} $ is a solution to \eqref{SimpBreg_X_subp} if and only if it satisfies the stationarity condition $\bzero\in\partial H^k(\bx) $, i.e.,
\begin{equation}\label{SimpBreg_X_optimality}
\def\arraystretch{1.1}
\left\lbrace \begin{array}{ll}
\displaystyle \nabla_i G^k(\bx) + \delta_i = 0, & \mbox{if } i \in \mA_+(\bx),\\
\displaystyle \nabla_i G^k(\bx) - \delta_i = 0, & \mbox{if } i \in \mA_-(\bx),\\
\left\vert \nabla_i G^k(\bx) \right\vert \leq \delta_i, & \mbox{otherwise}.
\end{array} \right.
\end{equation}

Consider the pair $\left(\hat{\bx},\hat{\bs}\right)$ defined in Theorem~\ref{theorem:Bregman_Convergence_Eckstein} and in Remark~\ref{remark:theorem_Bregman_Convergence_Eckstein}. Let us define the scalars
$$
     \theta_1 = \dfrac{1}{2} \min\limits_{i\in\mA_\pm(\hat{\bx})} \left\vert \hat{x}_i\right\vert
     \quad \mbox{and} \quad
     \theta_2 = \dfrac{1}{2} \min\limits_{i\in\mA_0(\hat{\bx})} \left(\delta_i - \left\vert\nabla_i \hat{G}(\hat{\bx})\right\vert\right),
$$
where
$$\hat{G}(\bx) = F(\bx) + \dfrac{\lambda}{2}\Vert M\,\bx-\hat{\bs}\Vert^2.$$

We make the following assumptions, which imply that $ \theta_1,\theta_2>0 $.
\begin{assumption}\label{assumption:nonnull}
    The solution $\hat{\bx}$ to problem \eqref{const_l1reg_prob_unk_X} satisfies $\hat{\bx}\neq\bzero$.
\end{assumption}
\begin{assumption}\label{assumption:nondegeneracy}
	The solution $(\hat{\bx},\,\hat{\bs})$ to problem \eqref{const_l1reg_prob_unk_X} is nondegenerate, i.e.
	$$ \min\limits_{i\in\mA_0(\hat{\bx})} \left(\delta_i - \left\vert\nabla_i \hat{G}(\hat{\bx})\right\vert\right) > 0. $$
\end{assumption}

From Assumption~\ref{assumption:Lipschitz} and the definition of $\hat{G}(\bx)$ we have that $ \nabla \hat{G}(\bx) $ is Lipschitz continuous. Indeed, a Lipschitz constant for $ \nabla \hat{G}(\bx) $ is
	$$ \hat{L} = L + \lambda\,\|M\|^2. $$
Since, for any $\bx\in\Re^{n_x}$ and $ k\in\N $,
$$  \nabla G^k(\bx) = \nabla F(\bx) + \lambda\,M^\top (M\,\bx-\bs^k)\quad\mbox{and}\quad \nabla \hat{G}(\bx) = \nabla F(\bx) + \lambda\,M^\top (M\,\bx-\hat{\bs}), $$
we have that for any $ \by,\bz\in\Re^{n_x} $
$$ \left\Vert \nabla G^k(\by) - \nabla G^k(\bz)  \right\Vert = \left\Vert \nabla \hat{G}(\by) - \nabla \hat{G}(\bz)  \right\Vert \leq \hat{L} \|\by-\bz\|. $$
i.e., $ \hat{L} $ is also a Lipschitz constant for $ \nabla G^k(\bx) $.

\noindent
The following lemma shows that when $ \bx^k $ is sufficiently close to $ \hat{\bx} $, then some entries of $ \bx^k $ and $ \hat{\bx} $ have the same sign
(see~\cite[Lemma~3.1]{Robinson:2018OMS_farsa}).

\begin{lemma}\label{Lemma_same_sign}
	If $ \left\|\bx^k - \hat{\bx} \right\|\leq \dfrac{\theta_1}{2}$ then 
	$$ \sign(x^k_i) = \sign(\hat{x}_i), \qquad \forall i\in\mA_\pm(\hat{\bx}) \cup\left(\mA_0(\hat{\bx}) \cap \mA_0(\bx^k)\right). $$
\end{lemma}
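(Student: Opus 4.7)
The statement splits naturally along the two subsets of indices named in the conclusion, and my plan is to handle them separately, as both reductions are essentially elementary once the definition of $\theta_1$ is unpacked.

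First I would dispose of the easy case $i \in \mA_0(\hat{\bx}) \cap \mA_0(\bx^k)$. By definition of these sets we have $\hat{x}_i = 0$ and $x^k_i = 0$, so $\sign(x^k_i) = 0 = \sign(\hat{x}_i)$ trivially, with no use of the hypothesis $\|\bx^k - \hat{\bx}\| \leq \theta_1/2$. This case is recorded only to match the statement of the lemma.

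The substantive case is $i \in \mA_\pm(\hat{\bx})$. Here I would first observe that, by the very definition
$$\theta_1 = \frac{1}{2} \min_{j \in \mA_\pm(\hat{\bx})} |\hat{x}_j|,$$
every such index satisfies $|\hat{x}_i| \geq 2\theta_1$. On the other hand, the componentwise bound
$$|\hat{x}_i - x^k_i| \leq \|\hat{\bx} - \bx^k\| \leq \theta_1/2$$
holds by the hypothesis and the standard fact $|v_i| \leq \|\bv\|$. Combining the two, $|\hat{x}_i - x^k_i| \leq \theta_1/2 < 2\theta_1 \leq |\hat{x}_i|$, from which $x^k_i$ cannot lie on the opposite side of $0$ from $\hat{x}_i$ (nor equal $0$): indeed, if $\hat{x}_i > 0$ and $x^k_i \leq 0$, then $|\hat{x}_i - x^k_i| \geq \hat{x}_i \geq 2\theta_1$, contradicting the displayed inequality, and symmetrically when $\hat{x}_i < 0$. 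Hence $\sign(x^k_i) = \sign(\hat{x}_i)$.

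There is no real obstacle here; the only care needed is in the sign bookkeeping for the strict-versus-nonstrict inequalities (so that one rules out $x^k_i = 0$ as well as the wrong sign). Once those are in place, combining the two cases gives the conclusion for every $i$ in the union, and the proof is complete.
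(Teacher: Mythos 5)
Your proof is correct: the paper itself gives no inline proof of Lemma~\ref{Lemma_same_sign}, deferring to \cite[Lemma~3.1]{Robinson:2018OMS_farsa}, and your componentwise argument ($|\hat{x}_i|\geq 2\theta_1$ for $i\in\mA_\pm(\hat{\bx})$ versus $|x^k_i-\hat{x}_i|\leq\|\bx^k-\hat{\bx}\|\leq\theta_1/2$) is exactly the standard one used there. The only point worth making explicit is that the strict inequality $\theta_1/2<2\theta_1$ relies on $\theta_1>0$, which the paper secures via Assumption~\ref{assumption:nonnull} (so that $\mA_\pm(\hat{\bx})\neq\emptyset$ and the minimum defining $\theta_1$ is taken over a nonempty finite set of positive numbers).
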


We recall that $ \Re^{n_x} $ can be splitted into $ 2^{n_x} $ orthants, and introduce the following definition.
\begin{definition}
	Given any $n_x$-ple $ \sigma\in\{-1,1\}^{n_x} $, the orthant associated with $ \sigma $ is defined as
	$$ \Omega_\sigma = \left\lbrace \bx\in\Re^{n_x} : \; (x_i\geq0 \mbox{ if }\sigma_i=1) \;\wedge\; (x_i\leq0 \mbox{ if }\sigma_i=-1)\right\rbrace. $$ 
\end{definition}

\begin{remark}
\label{remark_same_sign}
Lemma~\ref{Lemma_same_sign} suggests that when the current iterate $ \bx^k $ is close to the solution $\hat{\bx}$, the nonzero entries of $ \bx^k $ have the same sign as the corresponding entries of the solution $ \hat{\bx} $, i.e., $ \bx^k $ and $ \hat{\bx} $ lie in the same orthant of $ \Re^{n_x} $. Therefore one could think of restricting the current subproblem~\eqref{SimpBreg_X_subp} to the orthant containing $ \bx^k $.
The restriction of $ H^k(\bx) $ to an orthant $ \Omega_\sigma $ has the form
\begin{equation*}
H^k_{| \Omega_\sigma}(\bx) = G^k_{| \Omega_\sigma}(\bx) + \left\langle\bnu_\sigma,\,\bx\right\rangle,
\end{equation*}
where we set for all $ i $
\begin{equation*}
[\bnu_\sigma]_i = \left\lbrace \begin{array}{rl}
 \delta_i, & \mbox{if } \sigma_i = \phantom{+}1,\\
-\delta_i, &  \mbox{if } \sigma_i = -1.
\end{array} \right.
\end{equation*}

\noindent Since $ H^k_{| \Omega_\sigma}(\bx) $ is a smooth function, if we knew that the current orthant contained the solution to \eqref{SimpBreg_X_subp}, then we could choose to solve the subproblem with high accuracy by using techniques suited for smooth bound-constrained optimization problems. Similar ideas have been exploited in the solution of unconstrained $\ell_1$-regularized nonlinear problems, giving rise to the family of the so-called ``orthant-based algorithms'' \cite{Byrd:2012MathProg,Keskar:2016OBA}.
\end{remark}

\smallskip
We aim at introducing subspace-acceleration steps into the Bregman framework. This means that, at suitable Bregman iterations, we want to replace the minimization of $ H^k$ with the minimization of its restriction to the orthant face determined by $ \mA_0(\bx^k) $, i.e., the set
\begin{equation}\label{equation:orthant_face_xk}
    \left\lbrace \by\in\Re^{n_x} \;:\; \left(y_i = 0,\;i\in\mA_0(\bx^k)\right) \;\wedge\; \left(\sign(y_i)=\sign(x^k_i),\;i\in\mA_\pm(\bx^k)\right) \right\rbrace.
\end{equation}
When $ \mA_0(\bx^k) $ is large, this could result in a significant reduction of the computational cost of determining the next iterate.

Recall that the optimality of a given point $\bx$ with respect to problem \eqref{SimpBreg_X_subp} can be measured in terms of the minimum norm subgradient of $H^k$ at a given point $\bx$, i.e., the vector $\bg^k(\bx)$ defined componentwise as 
$$
[\bg^k(\bx)]_i = \left\{ \begin{array}{ll}
\nabla_i G^k(\bx) + \delta_i, & \mbox{if } i \in \mA_+(\bx) \mbox{ or } (i \in \mA_0(\bx) \mbox{ and } \nabla_i G^k(\bx) + \delta_i  < 0), \\
\nabla_i G^k(\bx) - \delta_i,  & \mbox{if } i \in \mA_-(\bx) \mbox{ or } (i \in \mA_0(\bx) \mbox{ and } \nabla_i G^k(\bx) - \delta_i  > 0),\\
0, & \mbox{otherwise}.
\end{array} \right.
$$
By following \cite{Solntsev:2015OMS,Robinson:2017FaRSA}, we split $\bg^k(\bx)$ into the vectors $ \bbeta^k(\bx) $ and $ \bphi^k(\bx) $, which measure the optimality of $ \bx $ with respect to the zero and nonzero variables respectively. The two vectors are defined componentwise as
\begin{equation}\label{Def_beta_phi}
\def\arraystretch{1.1}
\begin{array}{l}
\left[\bbeta^k(\bx)\right]_i = \left\{  \begin{array}{ll}
\nabla_i G^k(\bx) + \delta_i, & \mbox{if } i \in \mA_0(\bx) \mbox{ and } \nabla_i G^k(\bx) + \delta_i  < 0, \\
\nabla_i G^k(\bx) - \delta_i,  & \mbox{if } i \in \mA_0(\bx) \mbox{ and } \nabla_i G^k(\bx) - \delta_i  > 0, \\
0,                                           & \mbox{otherwise},
\end{array} \right. \\
\left[\bphi^k(\bx)\right]_i = \left\{  \begin{array}{ll}
0,                                           & \mbox{if } i \in \mA_0(\bx), \\
\min \{ \nabla_i G^k(\bx) + \delta_i, \max \{ x_i ,  \nabla_i G^k(\bx) - \delta_i \} \},
& \mbox{if } i \in \mA_+(\bx), \\
\max \{ \nabla_i G^k(\bx) - \delta_i, \min \{ x_i ,  \nabla_i G^k(\bx) + \delta_i \} \},
& \mbox{if } i \in \mA_-(\bx).
\end{array} \right.
\end{array}
\end{equation}

\noindent It is straightforward to check that if $ \bbeta^k(\bar{\bx}) = \bzero $ and $ \bphi^k(\bar{\bx})=\bzero $ at any point $ \bar{\bx}\in\Re^{n_x} $,
then $ \bar{\bx} $ is a stationary point for $ H^k(\bx) $. It is worth noting that the vector $ \bphi^k(\bx) $ also takes into account how much nonzero variables can move before becoming zero, i.e., before $\bx$ enters another orthant \cite{Robinson:2017FaRSA}.

Now we can prove a bound on the components of $ \nabla G^k(\bx^k) $ corresponding to indices in $\mA_0(\hat{\bx})$ when $ (\bx^k,\,\bs^k) $ is ``sufficiently close'' to $ (\hat{\bx},\,\hat{\bs}) $. The result extends to the case of Bregman iterations for problem \eqref{const_l1reg_prob_unk_X} a similar result proved in \cite{Robinson:2018OMS_farsa} for the solution of $\ell_1$-regularized unconstrained minimization problems.
\begin{theorem}
	If $ \left\|\bx^k - \hat{\bx} \right\|\leq \min\left\{\dfrac{\theta_1}{2},\,\dfrac{\theta_2}{2 \hat{L}}\right\}$ and $ \left\|\bs^k - \hat{\bs} \right\|\leq  \dfrac{\theta_2}{2\lambda \|M\|}$, then
	\begin{itemize}
		\item[i)] $ \left| \nabla_i G^k(\bx^k)\right| \leq \delta_i-\theta_2, \qquad \forall i \in\mA_0(\hat{\bx}),$
		\item[ii)] $ \bbeta^k(\bx^k)=\bzero $.
	\end{itemize}
\end{theorem}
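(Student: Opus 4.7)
The plan is to obtain (i) by comparing $\nabla_i G^k(\bx^k)$ with $\nabla_i \hat{G}(\hat{\bx})$ via a two-step triangle inequality, then deduce (ii) by combining (i) with Lemma \ref{Lemma_same_sign} to show that $\mA_0(\bx^k) \subseteq \mA_0(\hat{\bx})$.

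For item (i), the definition of $\theta_2$ gives $|\nabla_i \hat{G}(\hat{\bx})| \leq \delta_i - 2\theta_2$ for every $i \in \mA_0(\hat{\bx})$, so it suffices to bound the perturbation $\|\nabla G^k(\bx^k) - \nabla \hat{G}(\hat{\bx})\|$ by $\theta_2$. I would split this perturbation as
\begin{equation*}
\nabla G^k(\bx^k) - \nabla \hat{G}(\hat{\bx}) = \bigl[\nabla G^k(\bx^k) - \nabla G^k(\hat{\bx})\bigr] + \bigl[\nabla G^k(\hat{\bx}) - \nabla \hat{G}(\hat{\bx})\bigr].
\end{equation*}
The first bracket is controlled by the Lipschitz constant $\hat{L}$ of $\nabla G^k$ (established just before the lemma) together with the hypothesis $\|\bx^k - \hat{\bx}\| \leq \theta_2/(2\hat{L})$, giving at most $\theta_2/2$. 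The second bracket simplifies to $\lambda M^\top(\hat{\bs} - \bs^k)$, because the difference between $G^k$ and $\hat{G}$ only lies in the vectors $\bs^k$ and $\hat{\bs}$, so its norm is at most $\lambda\|M\|\cdot\|\bs^k - \hat{\bs}\| \leq \theta_2/2$ by the second hypothesis. Adding the two estimates yields $\|\nabla G^k(\bx^k) - \nabla \hat{G}(\hat{\bx})\| \leq \theta_2$, and the componentwise triangle inequality $|\nabla_i G^k(\bx^k)| \leq |\nabla_i \hat{G}(\hat{\bx})| + \theta_2 \leq \delta_i - \theta_2$ closes (i).

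For item (ii), I would first note the inclusion $\mA_0(\bx^k) \subseteq \mA_0(\hat{\bx})$: if $i \in \mA_\pm(\hat{\bx})$, then Lemma \ref{Lemma_same_sign}, applicable because $\|\bx^k - \hat{\bx}\| \leq \theta_1/2$, forces $\sign(x^k_i) = \sign(\hat{x}_i) \neq 0$, hence $i \notin \mA_0(\bx^k)$. Now pick any index $i \in \mA_0(\bx^k)$; by the inclusion it also lies in $\mA_0(\hat{\bx})$, so item (i) gives $|\nabla_i G^k(\bx^k)| \leq \delta_i - \theta_2 < \delta_i$. Consequently, both $\nabla_i G^k(\bx^k) + \delta_i \geq \theta_2 > 0$ and $\nabla_i G^k(\bx^k) - \delta_i \leq -\theta_2 < 0$, so neither branch in the definition \eqref{Def_beta_phi} of $\bbeta^k$ is activated, and $[\bbeta^k(\bx^k)]_i = 0$. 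For $i \notin \mA_0(\bx^k)$ the definition assigns $0$ directly, so $\bbeta^k(\bx^k) = \bzero$.

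The only real subtlety is the simultaneous perturbation of both the evaluation point ($\bx^k$ versus $\hat{\bx}$) and the underlying function ($G^k$ versus $\hat{G}$, through $\bs^k - \hat{\bs}$); the chosen radii $\theta_2/(2\hat{L})$ and $\theta_2/(2\lambda\|M\|)$ are tuned precisely so that each of the two pieces contributes at most $\theta_2/2$, which is exactly the slack available in the definition of $\theta_2$. Once this bookkeeping is made explicit, everything else reduces to invoking Lemma \ref{Lemma_same_sign} and checking the cases in the definition of $\bbeta^k$.
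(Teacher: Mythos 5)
Your proof is correct and follows essentially the same route as the paper: the same decomposition of $\nabla G^k(\bx^k) - \nabla \hat{G}(\hat{\bx})$ into a Lipschitz term and the constant shift $\lambda\,M^\top(\hat{\bs}-\bs^k)$, each bounded by $\theta_2/2$ using the two hypotheses, and for (ii) the same use of Lemma~\ref{Lemma_same_sign} to get $\mA_0(\bx^k)\subseteq\mA_0(\hat{\bx})$ followed by checking that part (i) deactivates both branches in the definition of $\bbeta^k$. The only difference is cosmetic: you read $\left|\nabla_i \hat{G}(\hat{\bx})\right| \le \delta_i - 2\theta_2$ directly off the definition of $\theta_2$, whereas the paper reaches the same conclusion via a slightly longer detour with the constant $c_1 = \max_{l\in\mA_0(\hat{\bx})\cap\{1,\ldots,n\}} \left|\nabla_l \hat{G}(\hat{\bx})\right|$ and a case split on $i\le n$ versus $i>n$; your version is marginally more direct.
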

\begin{proof}
	In order to prove i), let us consider an index $ k $ satisfying the hypotheses. For all $ i $, we can write
		\begin{align}
			\left| \big| \nabla_i G^k(\bx^k) \big|- \big| \nabla_i \hat{G}(\hat{\bx})\big| \right| &\leq \left| \nabla_i G^k(\bx^k) - \nabla_i \hat{G}(\hat{\bx}) \right| = \\
			&   = \left| \nabla_i \hat{G}(\bx^k) + [\lambda\,M^\top (\hat{\bs}-\bs^k)]_i - \nabla_i \hat{G}(\hat{\bx}) \right| \leq\\
			&\leq \left\Vert \nabla \hat{G}(\bx^k) + \lambda\,M^\top (\hat{\bs}-\bs^k) - \nabla \hat{G}(\hat{\bx})  \right\Vert \leq\\
			&\leq \hat{L}\,\|\bx^k-\hat{\bx}\| + \lambda\,\|M\|\,\|\bs^k-\hat{\bs}\|\leq \frac{\theta_2}{2}+\frac{\theta_2}{2} = \theta_2.
		\end{align}
	This implies that
	\begin{equation}\label{Thm_grad_gradstar}
	\left| \nabla_i G^k(\bx^k)\right| \leq \left| \nabla_i \hat{G}(\hat{\bx})\right| + \theta_2
	\end{equation}
	for all $ i $. Recall that $ \delta_i = \tau_1 $ for $ i\leq n $ and $ \delta_i = \tau_2 $ otherwise. Without loss of generality we analyze the case $ i\leq n $;
	the case $i > n$ can be proved in the same way. By defining
	$$ c_1 = \max\limits_{l\in\mA_0(\hat{\bx})\cap\{1,\ldots,n\}} \left| \nabla_l \hat{G}(\hat{\bx})\right|, $$
	we have that
	$$ \theta_2 \leq (\tau_1 - c_1)/2. $$
	Let $ i\in\mA_0(\hat{\bx})\cap\{1,\ldots,n\} $. From \eqref{Thm_grad_gradstar} and the previous inequality we get
	\begin{eqnarray*}
		\left| \nabla_i G^k(\bx^k)\right| &  \leq  & \left| \nabla_i \hat{G}(\hat{\bx})\right| + \theta_2 \leq c_1 + \frac{\tau_1 - c_1}{2} = \\ 
		&    =    & \tau_1 - \frac{\tau_1 - c_1}{2} \leq \tau_1 - \theta_2 = \delta_i - \theta_2.
	\end{eqnarray*}
	This completes the proof of i).
	
	To prove ii), we observe that $ \beta^k_i(\bx^k)$ can be nonzero only for $ i\in\mA_0(\bx^k) $ and that $ \mA_0(\bx^k) $ can be written as
	$$ \mA_0(\bx^k) = \left(\mA_0(\bx^k)\cap \mA_0(\hat{\bx})\right) \cup \left( \mA_0(\bx^k)\cap \mA_\pm(\hat{\bx}) \right). $$
	From Lemma~\ref{Lemma_same_sign} it follows that $ \mA_0(\bx^k)\cap \mA_\pm(\hat{\bx})=\emptyset $. For $ i\in \mA_0(\bx^k)\cap \mA_0(\hat{\bx}) $ we have
	$$ \left| \nabla_i G^k(\bx^k)\right| \leq \delta_i - \theta_2 < \delta_i,$$
	which concludes the proof.
\end{proof}

The previous theorem suggests that when $ (\bx^k,\,\bs^k) $ is in a neighborhood of the solution $ (\hat{\bx},\,\hat{\bs}) $, the only variables that violate the optimality conditions are the nonzero ones.

By Remark~\ref{remark_same_sign}, the orthant containing the solution is identified as the iterates converge to the solution. Therefore, one could think of introducing into the general inexact Bregman framework \eqref{SimpBreg_X_1}-\eqref{SimpBreg_X_2} an automatic criterion to decide whether the solution to \eqref{SimpBreg_X_1} can be searched in the current orthant face by means of a more efficient algorithm. Inspired by similar conditions introduced in the framework of bound-constrained quadratic problems \cite{Friedlander:1994b,Dostal:1997,Robinson:2015,diSerafinoToraldoViolaBarlow:2018}, we propose to perform subspace-acceleration steps whenever
\begin{equation}\label{switch_criterion}
    \left\Vert \bbeta^k(\bx^k) \right\Vert \leq \gamma\,\left\Vert \bphi^k(\bx^k) \right\Vert,
\end{equation}
where $ \gamma > 0 $ is a suitable constant. The idea is based on the observation that when the optimality violation with respect to the zero variables is smaller than the violation with respect to the nonzero ones, restricting the minimization to the latter could be more beneficial.

Moreover, since one could expect that for $ (\bx^k,\,\bs^k) $ ``sufficiently close'' to $ (\hat{\bx},\,\hat{\bs}) $ the minimizer of problem \eqref{SimpBreg_X_subp} lies in the same orthant face as $ \bx^k $, it is possible to replace the minimization of $ H^k(\bx)$ over the orthant face containing $\bx^k$ with the minimization over the affine closure of the orthant face, i.e.,
\begin{equation}\label{equation:aff_closure_orthant_face_xk}
\mF^k = \left\lbrace \by\in\Re^{n_x} :\; y_i = 0,\;i\in\mA_0(\bx^k) \right\rbrace.
\end{equation}
This results in replacing the nonsmooth unconstrained minimization problem \eqref{SimpBreg_X_subp} with the smooth optimization problem
\begin{equation}\label{SimpBreg_X_AccSubp}
    \bz^{k+1} = \argmin\limits_{x_i = 0,\,i\in\mA_0^k} \displaystyle H^k_{| \mF^k}(\bx) \equiv G^k_{| \mF^k}(\bx) + \sum_{i\in\mA_\pm^k}\nu^k_i\,x_i,
\end{equation}
where we set $ \mA_\pm^k \equiv \mA_\pm(\bx^k) $, $ \mA_0^k \equiv \mA_0(\bx^k) $ and for all $ i\in\mA_\pm^k $
\begin{equation*}
\nu^k_i = \left\lbrace \begin{array}{rl}
\delta_i, & \mbox{if } \sign(\bx^k) = {+}1,\\
-\delta_i, &  \mbox{if } \sign(\bx^k) = -1.
\end{array} \right.
\end{equation*}

It is worth noting that, by fixing the zero variables, problem \eqref{SimpBreg_X_AccSubp} can be equivalently rewritten as an unconstrained minimization over $ \Re^{|\mA_\pm^k|} $. Therefore, efficient algorithms for unconstrained smooth optimization can be exploited for its solution. Since criterion \eqref{switch_criterion} does not guarantee that $ \bz^{k+1} $ lies in the same orthant as $ \bx^k $, we select the iterate $ \bx^{k+1} $ by a projected backtracking line search ensuring a sufficient decrease in $H^k$, i.e.,
\begin{equation}\label{eqn:sufficient_decrease}
     H^k(\bx^{k+1}) - H^k(\bx^k) \le \eta \langle \nabla H^k(\bx^k), \, \bx^{k+1} - \bx^k \rangle, 
\end{equation}
where $\eta$ is a small positive constant. Note that the orthogonal projection $\pr (\bz; \bx)$ of a point $ \bz $ onto the orthant face containing $ \bx $ can be easily computed componentwise as
$$
\left[\pr (\bz; \bx)\right]_i =
\left\{  \begin{array}{cl}
\max \{0, z_i\}, & \mbox{if } i \in \mA_+(\bx), \\
\min \{0, z_i\},  & \mbox{if } i \in \mA_-(\bx), \\
0,                    &  \mbox{if } i \in \mA_0(\bx).
\end{array} \right.
$$

The resulting method, which we call \emph{Split Bregman with Subspace Acceleration} (SBSA) is outlined in Algorithm~\ref{alg:SBSA}.

\begin{algorithm}[ht!]
	\small
	\caption{Split Bregman with Subspace Acceleration (SBSA)}
	\label{alg:SBSA}
	\begin{algorithmic}[1]
		\State Choose $\bx^0 = \bzero \in \Re^{n_x}$, $\bs^0 = \bzero \in\Re^{n_s}$, $\lambda>0$, $\gamma > 0$;
		\State $\bx^1 \approx \argmin_{\bx} H^0(\bx)$; \label{alg:SBSA_first_step_fista}
		\For{ $k = 1, 2, \ldots$ }
		\State $\bs^k = \bs^{k-1} + \, \bs - M \, \bx^k$; \label{alg:SBSA_sk}
		\If{ $\| \bbeta^k(\bx^k) \| \le \gamma  \| \bphi^k(\bx^k) \|$ }
		\State $\bz^{k+1} \approx  \argmin \left\lbrace H^k_{| \mF^k}(\bx) \;:\; x_i = 0,\,i\in\mA_0^k \right\rbrace$;\label{alg:SBSA_subsp_acc}
		\State $\bx^{k+1} = \pr \left( \bx^k + \alpha^k (\bz^{k+1} - \bx^k);\; \bx^k\right)$ with $\alpha^k$ obtained by backtracking line search; \label{alg:SBSA_line_search}
		\If{ $\bx^{k+1}$ not sufficiently accurate}  \Comment{\textsc{safeguard}} \label{alg:SBSA_safeguard_1}
		\State $\bx^{k+1} \approx \argmin_{\bx} H^k(\bx)$; \label{alg:SBSA_fista call_safeguard}
		\EndIf \label{alg:SBSA_safeguard_2}
		\Else
		\State $\bx^{k+1} \approx \argmin_{\bx} H^k(\bx)$; \label{alg:SBSA_fista call_standard}
		\EndIf
		\EndFor
	\end{algorithmic}
\end{algorithm}

The following theorem, which is an adaptation of \cite[Theorem A.3]{Keskar:2016OBA}, shows that the line search procedure at step~\ref{alg:SBSA_line_search} of Algorithm~\ref{alg:SBSA} is well defined.

\begin{theorem}
	The backtracking projected line search in the acceleration phases of SBSA terminates in a finite number of
	iterations.
\end{theorem}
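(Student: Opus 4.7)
The plan is to exhibit an explicit threshold $\alpha_0>0$ below which (i) the projection acts as the identity, and (ii) the sufficient-decrease condition \eqref{eqn:sufficient_decrease} is satisfied. Since the backtracking line search shrinks $\alpha$ by a constant factor, reaching an $\alpha\le\alpha_0$ takes only finitely many trials.

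\emph{Step 1: Projection is eventually the identity.} Let $\bd^k=\bz^{k+1}-\bx^k$. By construction $\bz^{k+1}\in\mF^k$, so $[\bd^k]_i=0$ for $i\in\mA_0^k$, and consequently the $i$-th component of $\bx^k+\alpha\bd^k$ remains $0$ for those indices. For $i\in\mA_\pm^k$ we have $x_i^k\neq 0$, so by continuity there is a positive number
\[
\alpha_0^{(1)} \;=\; \min_{i\in\mA_\pm^k,\,d_i^k\neq 0}\frac{|x_i^k|}{|d_i^k|}
\]
such that for every $\alpha\in(0,\alpha_0^{(1)}]$, $\sign([\bx^k+\alpha\bd^k]_i)=\sign(x_i^k)$ for all $i\in\mA_\pm^k$. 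Hence $\bx^k+\alpha\bd^k$ lies in the orthant face containing $\bx^k$, and $\pr(\bx^k+\alpha\bd^k;\bx^k)=\bx^k+\alpha\bd^k$.

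\emph{Step 2: Descent along $\bd^k$.} On the affine space $\mF^k$ the function $H^k$ coincides with the smooth function $H^k_{|\mF^k}$, and since $\bd^k$ is tangent to $\mF^k$, the directional derivative $\langle\nabla H^k(\bx^k),\bd^k\rangle$ equals $\langle\nabla H^k_{|\mF^k}(\bx^k),\bd^k\rangle$. Because $G^k(\bx)+\lambda\|M\bx\|^2/2$ (and thus $H^k_{|\mF^k}$) is strictly convex, its unique minimizer $\bz^\star$ on $\mF^k$ satisfies $\langle\nabla H^k_{|\mF^k}(\bx^k),\bz^\star-\bx^k\rangle<0$ whenever $\bx^k\neq\bz^\star$ (otherwise the algorithm has already located the subproblem solution). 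Assuming, as is standard in inexact subspace-minimization schemes, that the approximation $\bz^{k+1}$ in line~\ref{alg:SBSA_subsp_acc} is accurate enough that $\bd^k$ inherits this strict descent property, we have
\[
\Delta^k \;\equiv\; \langle\nabla H^k(\bx^k),\bd^k\rangle \;<\; 0.
\]

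\emph{Step 3: Armijo condition via the descent lemma.} Since $\nabla G^k$ is $\hat L$-Lipschitz and $\bx^k+\alpha\bd^k\in\mF^k$ for $\alpha\in(0,\alpha_0^{(1)}]$, the standard descent lemma applied to $H^k_{|\mF^k}$ yields
\[
H^k(\bx^k+\alpha\bd^k)-H^k(\bx^k)\;\le\;\alpha\,\Delta^k+\frac{\hat L}{2}\alpha^2\|\bd^k\|^2.
\]
Consequently \eqref{eqn:sufficient_decrease} holds as soon as
\[
\alpha \;\le\; \alpha_0^{(2)} \;\equiv\; \frac{2(1-\eta)\,|\Delta^k|}{\hat L\,\|\bd^k\|^2}.
\]

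\emph{Step 4: Conclusion.} Set $\alpha_0=\min\{\alpha_0^{(1)},\alpha_0^{(2)}\}>0$. For every $\alpha\in(0,\alpha_0]$ the projected point coincides with $\bx^k+\alpha\bd^k$ (Step~1) and satisfies the sufficient-decrease test (Step~3). Because the backtracking procedure multiplies the trial step by a fixed factor in $(0,1)$ at each rejection, after at most $\lceil\log(\alpha_0/\alpha_{\text{init}})/\log(\text{factor})\rceil$ trials the stepsize falls below $\alpha_0$, and the line search terminates.

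The main obstacle is Step~2: ensuring that $\bd^k$ is a descent direction when the inner problem is solved only approximately. This would be clean if one assumes (as is implicit in the inexactness convention of the paper) that $\bz^{k+1}$ is accepted only once $\langle\nabla H^k_{|\mF^k}(\bx^k),\bz^{k+1}-\bx^k\rangle<0$, or equivalently that the inner solver has reduced $H^k_{|\mF^k}$ below $H^k_{|\mF^k}(\bx^k)$; either way this is precisely the role of the safeguard at lines~\ref{alg:SBSA_safeguard_1}--\ref{alg:SBSA_safeguard_2}. The remaining steps are essentially a standard Armijo analysis adapted to the projected setting.
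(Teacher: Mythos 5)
Your proof is correct and follows essentially the same route as the paper's: small steps along $\bd^k$ stay on the orthant face so the projection acts as the identity (the paper cites the first part of \cite[Theorem A.3]{Keskar:2016OBA} where you compute the threshold $\alpha_0^{(1)}$ explicitly), $\bd^k$ is a descent direction for the smooth restriction $H^k_{|\mF^k}$ by convexity, and a standard Armijo argument---which you make quantitative via the Lipschitz constant $\hat{L}$, whereas the paper only invokes descent---terminates in finitely many backtracking trials; the caveat you flag in Step 2 (that the inexact inner solve must satisfy $H^k_{|\mF^k}(\bz^{k+1})<H^k_{|\mF^k}(\bx^k)$ for convexity to yield $\Delta^k<0$) is present but glossed over in the paper too, which simply asserts that an ``approximate minimizer'' of a convex function gives a local descent direction. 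One small correction: guaranteeing that descent property is not the role of the safeguard at lines~\ref{alg:SBSA_safeguard_1}--\ref{alg:SBSA_safeguard_2}, which instead checks the accuracy of $\bx^{k+1}$ as a solution of the full Bregman subproblem as required by Theorem~\ref{theorem:Bregman_Convergence_Eckstein}; in practice descent holds because the inner solver, warm-started at $\bx^k$, monotonically decreases $H^k_{|\mF^k}$.
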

\begin{proof}
	Consider the $k$-th iteration of algorithm SBSA and suppose an acceleration step is taken.
	Let $\bz^{k+1}$ be the point computed at line~\ref{alg:SBSA_subsp_acc} of Algorithm~\ref{alg:SBSA} and
	$\bd^k = \bz^{k+1}-\bx^k$. By construction we have that $\bd^k_i = 0$ for all $ i\in\mA_0(\bx^k) $.
	By following the first part of the proof of \cite[Theorem A.3]{Keskar:2016OBA}, it is easy to show that there exists $ \bar{\alpha}>0 $ such that $ \pr ( \bx^k + \alpha \bd^k; \bx^k) = \bx^k + \alpha \bd^k$ for all $ \alpha\in (0,\,\bar{\alpha}]$,  i.e., $\bx^k + \alpha \bd^k$ lies in the same orthant face as $ \bx^k $. Since $ \bz^{k+1} $ is an approximate minimizer of $H^k_{\mF^k}$ and $H^k_{\mF^k}$ is convex, $ \bd^k $ is a local descent direction for $ H^k_{\mF^k} $ in $\bx^k$. This ensures that in a finite number of steps the backtracking procedure can find a value of $ \alpha $ guaranteeing sufficient decrease for $H^k_{\mF^k}$.
	By observing that $ H^k(\bx)=H^k_{\mF^k}(\bx) $ for each $ \bx $ lying in the same orthant face as $ \bx^k $, we conclude that the value of $ \alpha $ obtained with backtracking guarantees sufficient decrease of $ H^k(\bx)$.
\end{proof}

According to Theorem~\ref{theorem:Bregman_Convergence_Eckstein}, the convergence of the inexact scheme is only guaranteed when the solution of the subproblem in~\eqref{SimpBreg_X_subp} is sufficiently accurate. For this reason, a safeguard has been considered at lines~\ref{alg:SBSA_safeguard_1}-\ref{alg:SBSA_safeguard_2} of Algorithm~\ref{alg:SBSA}.
This could be inefficient in practice, because the output of the subspace acceleration is likely to be rejected when the iterate is far from the solution. In our implementation of Algorithm~\ref{alg:SBSA} we use a heuristic criterion to decide whether to accept the iterate generated by the subspace-acceleration step (see Section~\ref{sec:results}). We recall that for the exact Bregman scheme applied to problem \eqref{const_l1reg_prob_unk_X} it can be proved that (see \cite[Proposition~3.2]{Osher:2005mms})
\begin{equation}\label{lincon_violation_decrease}
    \| M \bx^{k+1} - \bs \| \leq \| M \bx^k - \bs \|
\end{equation}
for all $k$.
Based on this observation, we decided to accept the iterate produced by lines~\ref{alg:SBSA_subsp_acc}-\ref{alg:SBSA_line_search} of Algorithm~\ref{alg:SBSA} if \eqref{lincon_violation_decrease} is satisfied. Numerical experiments showed the effectiveness of this choice.

\section{Application: multi-period portfolio selection\label{sec:portfolio}}

Portfolio selection is central to financial economics and is the building block of the capital asset pricing model.
It aims to find an optimal allocation of capital among a set of assets by rational financial targets. 
For medium- and long-time horizons, a multi-period investment policy is considered: the investors can change the allocation of the wealth among the assets over time by the end of the investment, taking into account the time evolution of available information. In a multi-period setting the investment period is partitioned into sub-periods, delimited by the rebalancing dates
at which decisions are taken. More precisely, if $m$ is the number of sub-periods and $t_j = 1,\ldots,m+1$ denote the rebalancing dates, then a decision taken at time $t_j$ is kept in the $j$-th sub-period $[t_j, t_{j+1})$ of the investment. 
The optimal portfolio is defined by the vector
$$
  \bu = [\bu_1 ; \, \bu_2 ; \,  \dots ; \,  \bu_m] ,
$$
where $\bu_j \in \Re^{n_a}$ is the portfolio of holdings at the beginning of period $j$ and $n_a$ is the number of assets.

In a  multi-period mean variance  Markowitz framework,  the optimal portfolio is obtained  by simultaneously minimizing
the risk and maximizing the return of the investment.  
A common strategy to estimate the parameters of the Markowitz model is to use historical data as predictive of the future behavior of asset returns. This typically leads to ill-conditioned numerical problems. Different regularization techniques have been suggested with the aim of improving the problem conditioning. In the last years, $\ell_1$ regularization techniques have been considered to obtain sparse solutions in both the single and multi-period cases, with the aim of reducing costs \cite{Brodie:2009,Corsaro:2019coap,Corsaro:2019anorL1}.  Another useful interpretation of the
$\ell_1$ regularization is related to the amount of shorting in the portfolio. From the financial point of view, negative solutions correspond to short sales, which are generally transactions in which an investor sells borrowed securities in anticipation of a price decline.  A suitable tuning of the regularization parameter permits short controlling in the solution \cite{Brodie:2009}.

We focus on the fused lasso portfolio selection model  \cite{Corsaro:2019anorFused}, where an additional $\ell_1$ penalty term on the variation is added to the classical $\ell_1$ model in order to reduce the transaction costs. Indeed, in the multi-period case, the sparsity of the solution reduces the holding costs, but it does not guarantee low transaction costs if the pattern of nonzeros positions completely changes across periods.
The fused lasso term shrinks toward zero the differences of values of the wealth allocated across the assets between two contiguous rebalancing dates, thus encouraging smooth solutions that reduce transactions.

Let $\br_j\in \Re^{n_a}$ and $C_j \in \Re^{n_a\times n_a}$ contain respectively
the expected return vector and the covariance
matrix, assumed to be positive definite,
estimated at time $t_j$, $j = 1,\ldots, m$. The fused lasso portfolio selection model reads:
\begin{equation} \label{eq:M2}
\def\arraystretch{1.4}
\begin{array}{rl}
\min & \displaystyle \sum_{j=1}^{m} \left\langle \bu_j,\, C_j \bu_j\right\rangle  + \tau_1 \|\bu\|_1 
+ \tau_2 \sum_{j=1}^{m-1} \|\bu_{j+1}-\bu_j \|_1 \\
\mathrm{s.t.} & \displaystyle \left\langle \bu_1,\, \bone_{n_a}\right\rangle  = \xi_{ini},  \\
& \displaystyle  \left\langle \bu_j,\, \bone_{n_a}\right\rangle  = \left\langle \bone_{n_a}+\br_{j-1},\, \bu_{j-1}\right\rangle , \quad j = 2,\ldots,m,  \\
& \displaystyle  \left\langle \bone_{n_a}+\br_{m},\,\bu_m\right\rangle  = \xi_{fin}, \\
\end{array}
\end{equation}
where $\tau_1, \; \tau_2>0$,
$\xi_{ini}$ is the initial wealth, $\xi_{fin}$
is the target expected wealth resulting from the overall investment. 
The first constraint is the budget constraint.
The strategy is assumed to be self-financing, as required by constraints from
$2$ to $m$, where it is established that at the end of each period the wealth is given by
the revaluation of the previous one.
The $(m+1)$-st constraint defines the expected final wealth. Problem (\ref{eq:M2}) can be equivalently formulated as
\begin{equation} \label{eq:M2comp}
\def\arraystretch{1.4}
\begin{array}{rl}
\min               & \displaystyle \left\langle \bu,\,C \bu\right\rangle  + \tau_1 ||\bu||_1 + \tau_2  \|D\, \bu\|_1 \\
\mathrm{s.t.} & \displaystyle  A \bu = \mathbf{b},
\end{array}
\end{equation}

\noindent
where $C  \in \Re^{n \times n }$, with $n = m \cdot n_a$,
is the symmetric positive definite block-diagonal matrix
$$ C = \diag (C_1, C_2, \ldots, C_m), $$
$D \in \Re^{ (n-n_a)\times n}$ is the discrete difference matrix defined by
$$
d_{ij}= \left\{
\begin{array}{rl}
-1, & \mbox{ if }  j=i ,\\
1, & \mbox{ if }   j=i+n_a ,\\
0, & \mathrm{otherwise},
\end{array}
\right .
$$
$A \in  R^{(m+1) \times n}$ can be regarded as a $(m+1) \times m$ 
lower block-bidiagonal matrix, with blocks of dimension $1 \times n_a$, defined by
$$
a_{ij}= \left \{
\begin{array}{cl}
\bone_{n_a}^\top, & i=j , \\
-( \bone_{n_a} +\mathbf{r} _{i-1})^\top,  & j=i+1 , \\
\bzero_{n_a}^\top, & \mbox{otherwise} ,
\end{array} \right.
$$
and $\bb=(\xi_{ini},0,0,...,\xi_{fin})^\top  \in \Re^{m+1}$.

\subsection{Testing environment\label{sec:tests_metrics}}

The SBSA algorithm has been tested on three real datasets. Two of them
use a universe of investments compiled by Fama and French\footnote{Data available from \url{http://mba.tuck.dartmouth.edu/pages/faculty/ken.french/data_library.html\#BookEquity}}.
Specifically, the FF48 dataset contains monthly returns of 48 portfolios representing different industrial sectors, and the FF100 dataset includes monthly returns of 100 portfolios on the basis of size and book-to-market ratio. Both datasets consist of data ranging from July 1926 to December 2015. We consider a preprocessing procedure that eliminates the elements with the highest volatilities, so that the number of portfolios in FF100 is reduced to 96. 
In our experiments we use data during periods 
of 10, 20 and 30 years with annual rebalancing, i.e., we consider the
periods July 2005 - June 2015, July 1995 - June 2015, and July 1985 - June 2015. The corresponding test problems
are called FF48-10y, FF48-20y and FF48-30y, respectively.
The third dataset, denoted ES50, contains the daily returns of stocks included
in the EURO STOXX 50 Index Europe's leading blue-chip index for the Eurozone. 
The index covers the 50 largest companies among the 19 supersectors in terms of free-float
market cap in 11 Eurozone countries. 
The dataset contains daily returns for each stock in the index from January 2008 to December 2013. For this
test case we consider both annual ($ m = 6 $ years) and quarterly ($ m = 22 $ quarters) rebalancing.
The corresponding test problems are referred to as ES50-A and ES50-Q, respectively.

Following \cite{Corsaro:2019anorFused}, a rolling window for setting up the model parameters is considered. For each dataset, the length of the rolling windows is fixed in order to build positive definite covariance matrices and ensure statistical significance. Different datasets require different lengths for the rolling windows. FF100 requires ten-year data; for FF48 five years are sufficient;
one-year data are used for ES50.

Our portfolio is compared with the benchmark one that is based on the 
strategy where the total amount is equally divided among the assets at each rebalancing date. The portfolio built following this strategy is referred to as the multi-period {\em naive} portfolio and it is 
commonly used as a benchmark by investors because it is a simple rule that reduces risk enough to make a profit.
We assume that the investor has one unit of wealth at the beginning of the planning horizon, i.e., $\xi_{ini}=1$. In order to
compare the optimal portfolio with the naive one, we set the expected final wealth equal to that of the naive one,  i.e., $\xi_{fin}=\xi_{naive}$,  where
$$
\xi_{naive}=\frac{1}{n_a}\left(\ldots\left(\frac{1}{n_a}\left(\frac{\xi_{ini}}{n_a} \langle \mathbf{1}_{n_a} + \mathbf{r}_{1}, \, \mathbf{1}_{n_a} \rangle \right)
\langle \mathbf{1}_{n_a}+\mathbf{r}_{2}, \, \mathbf{1}_{n_a} \rangle \right) \ldots\right) \langle \mathbf{1}_{n_a}+\mathbf{r}_{m}, \, \mathbf{1}_{n_a} \rangle.
$$

Following~\cite{Corsaro:2019anorFused}, we consider some performance metrics that take into account the risk and the cost of the portfolio.
The next metric measures the risk reduction factor of the optimal strategy with respect to the benchmark one:
\begin{equation}\label{ratio}
\mbox{ratio} = \frac{\left\langle\bu_{naive},\,C\,\bu_{naive}\right\rangle}{\left\langle\bu_{opt},\,C \, \bu_{opt}\right\rangle}
\end{equation}
\noindent where $\bu_{naive}$ and $\bu_{opt}$ are the naive portfolio and the optimal one, respectively.

Another metric gives the 
percentage of active positions in portfolio, which is an estimate of the holding costs:
\begin{equation}\label{active}
\mbox{density} = \frac{card\left(\left\{\left| [\bu_{j}]_i  \right| \ge \epsilon_1, i=1,...,n_a, j=1,...,m\right\}\right) \cdot 100}{n} \ \%,
\end{equation}
where $card(S)$ denotes the cardinality of the set $S$.
The threshold $\epsilon_1$ is aimed at avoiding too small wealth allocations, since they make no sense in financial terms.
We note that the density of the naive portfolio is $\mbox{density}_{naive}=100\%$, so we have holding costs in each period for all assets. 
Finally, we use a metric giving information about the total number of variations in weights across periods, which are a measure of the transaction costs:
\begin{equation}\label{trace}
\mT=trace(V^\top  V)
\end{equation}
\noindent where  $V\in \Re^{n_a\times(m-1)}$ with
\begin{equation}
\label{eq:matrix_V}
v_{ij} = \left\{ 
\begin{array}{ll}
1, & \mbox{if } \left\vert [\bu_{j}]_i - [\bu_{j+1}]_i\right\vert \ge \epsilon_2,\\
0, & \mbox{otherwise}.\\
\end{array}\right.
\end{equation}

Note that (\ref{trace}) is a pessimistic estimate of the transaction costs because weights could also change for effect of
revaluation. In order to provide more detailed information about the investment, it is convenient to refer also to $||V||_1$, which is the maximum number of variations over the periods, and to  $||V||_\infty$, which is the maximum number of variations over the assets.

The choice of the regularization parameters $\tau_1$ and $\tau_2$ in \eqref{eq:M2}
plays a key role in obtaining solutions that meet the financial requirements. 
Starting from the numerical results in \cite{Corsaro:2019anorFused}, we selected parameters in $\{10^{-4},10^{-3},10^{-2}\}$, guaranteeing a good tradeoff between the performance metrics and the number of short positions for FF48-20y, FF100-20y and ES50-Q problems.
More in detail, we first set $\tau_1$ as the smallest value producing at most $4\%$ of short positions in the solution and then set $\tau_2$ as the value associated with the maximum ratio as defined in \eqref{ratio}.
We set $\tau_1 = \tau_2 = 10^{-2}$ for FF48-20y, $\tau_1 = 10^{-3}$ and $\tau_2 = 10^{-4}$ for FF100-20y, and $\tau_1 = 10^{-3}$ and $\tau_2 = 10^{-4}$ for ES50-Q. For the tests with different horizon times, we decided to keep the same parameter setting if it provided reasonable portfolios.
However, since the values of the parameters corresponding to FF100-20y produced a number of shorts greater than $4\%$ for FF100-30y, we increased them  as  $\tau_1 = 10^{-2}$ and $\tau_2 = 10^{-3}$.


\subsection{Implementation details and numerical results\label{sec:results}}
We developed a MATLAB implementation of Algorithm~\ref{alg:SBSA} specifically suited to take into account that problem \eqref{eq:M2comp} is quadratic. 
The stopping criterion used for both the standard Bregman iterations and the accelerated ones is based on the violation of the equality constraints, i.e., the execution is halted when
$$ \left\| A \bu^k - \bb \right\| \le tol_B, \quad\mbox{and}\quad \left\| D \bu^k - \bd^k \right\| \le tol_B, $$
with $tol_B=10^{-4}$, which guarantees
a sufficient accuracy in financial terms. A maximum number of Bregman iterations, equal to 10000, is also set. The parameter $\lambda$, penalizing the linear constraint violation in \eqref{SimpBreg_X_1}, is set to $1$.

The inner minimization in the standard Bregman iterations, i.e., for the $\ell_1$-regularized problems at lines \ref{alg:SBSA_first_step_fista}, \ref{alg:SBSA_fista call_safeguard} and~\ref{alg:SBSA_fista call_standard} of Algorithm~\ref{alg:SBSA}, is performed by means of the FISTA algorithm from the FOM package.\footnote{https://sites.google.com/site/fomsolver/} We recall that Theorem~\ref{theorem:Bregman_Convergence_Eckstein} requires the error in the solution of the subproblems to satisfy hypothesis (i). This condition cannot be used in practice, not only because the solution to the subproblem in (i) is unknown, but also because the required tolerance becomes too small after a few steps. However, as noted in \cite{EcksteinBertsekas:1992dr,Rockafellar:1976}, the criterion can be replaced by more practical ones.
We decided to stop the minimization when
$$
  \left\| \bz^{l+1} - \bz^l \right\| \le tol_F,
$$
where $\bz^l$ is the $l$-th FISTA iterate and $ tol_F $ is a fixed tolerance. In our tests we set $tol_F=10^{-5}$ for FF48 and FF100, while for ES50 it is necessary to set $tol_F=10^{-6}$ to ensure convergence of SB within the maximum number of outer iterations. The maximum number of FISTA iterations is set to 5000.

Regarding the subspace-acceleration steps (line~\ref{alg:SBSA_subsp_acc} of Algorithm~\ref{alg:SBSA}), since they can be easily reformulated as unconstrained quadratic optimization problems, we use the conjugate gradient (CG) method. In this case the minimization is stopped when
$$\left\| \brho^l \right\| \le \left\| \brho^0 \right\|\, tol_{CG},$$
where $\brho^l$ denotes the residual at the $l$-th CG iteration and $ tol_{CG} $ a fixed tolerance. In the tests we set $ tol_{CG} = 10^{-2}$; we also choose a maximum number of CG steps equal to half the size of the subproblem to be solved. In the sufficient decrease condition~\eqref{eqn:sufficient_decrease} we set $\eta=10^{-1}$.

Concerning criterion~\eqref{switch_criterion} for switching between the standard Bregman iterations and the subspace-acceleration steps, we observed that small values of $\gamma$ tend to penalize the execution of acceleration steps, leading to no improvement in the performance of the algorithm. Thus, in order to favor the use of subspace-acceleration steps, we decided to initialize the parameter $\gamma$ equal to $10$ and to update it during the algorithm with an automatic adaptation strategy similar to that used in~\cite{diSerafinoToraldoViolaBarlow:2018}. In particular, the value of $\gamma$ is reduced by a factor 0.9 when \eqref{switch_criterion} holds, i.e., when subspace-acceleration steps are performed, and is increased by a factor 1.1 otherwise. To warmstart the algorithm, we perform 5 standard Bregman iterations before allowing acceleration.

As regards the safeguard at lines~\ref{alg:SBSA_safeguard_1}-\ref{alg:SBSA_safeguard_2} of Algorithm~\ref{alg:SBSA}, by numerical experiments we found that if $\| M \bx^{k+1} - \bs \| > \| M \bx^k - \bs \|$, then it is generally convenient to accept $\bx^{k+1}$, compute $\bs^{k+1}$ according to 
line~\ref{alg:SBSA_sk} and solve by FISTA the subproblem involving $H^{k+1}$.

In order to assess the performance of SBSA, we compared it with two state-of-the-art methods for the solution of problem \eqref{TVl1reg_problem}:
\begin{itemize}
    \item the split Bregman iteration in \cite[Section~3]{Goldstein:2009SB}, which we denote SB;
    \item the accelerated ADMM algorithm proposed in \cite{Chen:2011jcam}, called AL\_SOP.
\end{itemize}
We note that the SB algorithm is equal to the SBSA algorithm without subspace acceleration. In SB we made the same choices as in SBSA for the solution of the $\ell_1$-regularized subproblems with FISTA and the stopping criteria, to make the effect of the acceleration steps clearer.
Regarding AL\_SOP we observe that, by introducing suitable auxiliary variables, problem \eqref{eq:M2comp} can be equivalently written as
\begin{equation}\label{L1_problem_for_ADMM}
\begin{array}{rl}
\min & \displaystyle \left\langle \bu,\,C \bu\right\rangle + \tau_1\,\|\bv\|_1 + \tau_2\,\|\bd\|_1\\
\mbox{s.t.} & \displaystyle A\bu \qquad = \bb,\\
& \displaystyle \bu - \bv \quad = \bzero,\\
& \displaystyle D\,\bu - \bd = \bzero.
\end{array}
\end{equation}
Given $\by^k = [\bu^k;\bv^k;\bd^k]$, the $(k+1)$-st iteration of the ADMM scheme applied to problem~\eqref{L1_problem_for_ADMM} consists of the minimization of a quadratic function to determine $\bu^{k+1}$ and the application of two soft-thresholding operators to determine $ \bv^{k+1} $ and $ \bd^{k+1} $. By adapting the strategy proposed in \cite{Chen:2011jcam}, we introduced at the end of each iteration an acceleration step over the subspace spanned by $\by^{k+1}-\by^{k}$. The choice of the subspace and the parameter $ \varepsilon $ in the acceleration step was made by following the choice in~\cite[Section~4]{Chen:2011jcam}. In order to make a fair comparison between SBSA and AL\_SOP, we decided to use in AL\_SOP the same stopping criterion as in SBSA, with the additional requirement $ \left\| \bu^k - \bv^k \right\| \le tol_B$. Moreover, at each iteration the solution of the quadratic programming problem for computing $ \bu^{k+1} $ was performed by CG with the same stopping criterion used for the subproblems in SBSA. The maximum number of outer iterations for AL\_SOP was set to 25000.

Finally, we also carried out a comparison with a version of SBSA where the last iterate was forced to be a subspace acceleration. In the following this strategy is denoted SBSA-LSA (LSA: Last Step is an Acceleration).

All the tests were performed with MATLAB R2018b on a 2.5 GHz Intel Core i7-6500U with 12 GB RAM, 4 MB L3 Cache, and Windows 10 Pro (ver.~1909) operating system.

\begin{table}
    \centering
    \caption{Execution times (seconds) and outer iterations of the four algorithms. ``---'' indicates that
    the algorithm does not satisfy the stopping criterion within the maximum number of iterations.\label{table:execution_time}}
    {\small
    \begin{tabular}{|l|rr|rr|rr|rr|}
    	\hline
    	          & \multicolumn{2}{c|}{ SBSA} & \multicolumn{2}{c|}{ SBSA-LSA} & \multicolumn{2}{c|}{ SB} & \multicolumn{2}{c|}{ AL\_SOP} \\ \cline{2-9}
    	Problem   &  time &               outit &  time &                   outit &  time &            outit &  time &                  outit \\ \hline
    	FF48-10y  &  2.61 &                   7 &  2.61 &                       7 &  9.38 &              156 &  2.31 &                   2235 \\
    	FF48-20y  &  6.06 &                  11 &  6.06 &                      11 &  9.29 &               53 & 10.16 &                   4353 \\
    	FF48-30y  &  9.12 &                  14 &  9.12 &                      14 & 93.30 &              693 & 38.47 &                   8889 \\
    	FF100-10y &  6.63 &                  13 &  6.63 &                      13 & 35.81 &              121 &  4.52 &                   1502 \\
    	FF100-20y & 17.16 &                  10 & 17.31 &                      11 & 19.87 &               19 & 19.07 &                   2385 \\
    	FF100-30y & 42.10 &                   9 & 42.10 &                       9 & 46.08 &               21 &   --- &                    --- \\
    	ES50-Q    & 30.80 &                 209 & 30.96 &                     210 & 59.59 &              195 &  8.06 &                   2743 \\
    	ES50-A    &  5.05 &                 304 &  5.05 &                     305 & 14.87 &              269 &  0.87 &                   1377 \\ \hline
    \end{tabular}
    } 
\end{table}

\begin{table}
    \centering
    \caption{Comparison among the portfolios computed by the four considered algorithms. The values in brackets correspond to solutions without thresholding. $\mT_{naive}$ denotes the transaction cost for the naive solution.\label{table:solution_quality_comparison}}
    {\small
    \begin{tabular}{|l|rrrrrr|l|}
        \hline
         Problem   & \multicolumn{1}{c}{ratio} & \multicolumn{1}{c}{density} & \multicolumn{1}{c}{\# shorts} & \multicolumn{1}{c}{$\mT$} & \multicolumn{1}{c}{$\| V \|_1$} & \multicolumn{1}{c|}{$\| V \|_\infty$} & \multicolumn{1}{c|}{ $\mT_{naive}$} \\ \hline
        \multicolumn{8}{|c|}{SBSA}                                                                                                                                                                                                      \\ \hline
        FF48-10y  &                      2.32 &              15\%  [19.2\%] &                       0  [0] &                 30  [104] &                         6  [10] &                               8  [11] & { 480}                              \\
        FF48-20y  &                      2.28 &            12.6\%  [14.4\%] &                       0  [0] &                 55  [148] &                        11  [20] &                               7   [8] & { 960}                              \\
        FF48-30y  &                      4.64 &            16.3\%  [17.6\%] &                      29 [29] &                109  [274] &                        14  [30] &                              15  [16] & { 1440}                             \\
        FF100-10y &                      2.94 &            10.5\%  [10.5\%] &                      18 [18] &                 82  [110] &                         7  [10] &                              14  [15] & { 960}                             \\
        FF100-20y &                      9.08 &            14.1\%  [15.7\%] &                      81 [82] &                217  [351] &                        16  [17] &                              34  [37] & { 1920}                             \\
        FF100-30y &                      7.07 &             7.2\%   [8.3\%] &                      51 [51] &                174  [279] &                        16  [20] &                              18  [21] & { 2880}                             \\
        ES50-Q    &                      2.48 &            17.9\%  [29.8\%] &                       0  [0] &                 45  [380] &                        10  [22] &                               9  [32] & { 1100}                              \\
        ES50-A    &                      2.25 &            18.3\%  [32.3\%] &                       0  [0] &                 17  [114] &                         3   [6] &                              10  [27] & { 300}                              \\ \hline
        \multicolumn{8}{|c|}{SBSA-LSA}          \\ \hline
        FF48-10y  &                      2.32 &              15\%  [19.2\%] &                       0  [0] &                  30 [104] &                          6 [10] &                                8 [20] &                                     \\
        FF48-20y  &                      2.28 &            12.6\%  [14.4\%] &                       0  [0] &                  55 [148] &                         11 [20] &                                7 [11] &                               \\
        FF48-30y  &                      4.64 &            16.3\%  [17.6\%] &                      29 [29] &                 109 [274] &                         14 [30] &                               15 [16] &                              \\
        FF100-10y &                      2.94 &            10.5\%  [10.5\%] &                      18 [18] &                  82 [110] &                          7 [10] &                               14 [15] &                                 \\
        FF100-20y &                      9.08 &            14.1\%  [15.7\%] &                      81 [82] &                 217 [349] &                         16 [17] &                               34 [37] &                              \\
        FF100-30y &                      7.07 &             7.2\%   [8.3\%] &                      51 [51] &                 174 [279] &                         16 [20] &                               18 [21] &                                 \\
        ES50-Q    &                      2.48 &            17.5\%  [26.6\%] &                       0  [0] &                  47 [332] &                         10 [22] &                                9 [26] &                              \\
        ES50-A    &                      2.25 &            18.3\%  [28.7\%] &                       0  [0] &                  17 [104] &                          3  [6] &                               10 [25] &                              \\ \hline
        \multicolumn{8}{|c|}{SB}                                                                                                                                                                                                                       \\ \hline
        FF48-10y  &                      2.32 &              15\%  [17.5\%] &                       0  [0] &                  30  [93] &                          6 [10] &                                8 [16] &                                     \\
        FF48-20y  &                      2.28 &            12.6\%  [14.9\%] &                       0  [0] &                  55 [165] &                         11 [20] &                                7 [17] &                                     \\
        FF48-30y  &                      4.64 &            16.3\%  [18.0\%] &                      29 [41] &                 109 [286] &                         14 [30] &                               15 [24] &                                     \\
        FF100-10y &                      2.94 &            10.5\%  [10.6\%] &                      18 [18] &                  82 [112] &                          7 [10] &                               14 [15] &                                 \\
        FF100-20y &                      9.08 &            14.1\%  [15.7\%] &                      81 [89] &                 217 [339] &                         16 [18] &                               34 [40] &                                     \\
        FF100-30y &                      7.07 &             7.2\%   [8.8\%] &                      51 [51] &                 175 [312] &                         16 [20] &                               18 [33] &                             \\
        ES50-Q    &                      2.48 &            15.5\%  [28.3\%] &                       0  [0] &                  48 [355] &                         11 [22] &                               10 [26] &                              \\
        ES50-A    &                      2.27 &            18.3\%  [33.3\%] &                       0  [0] &                  16 [105] &                          3  [6] &                               10 [27] &                              \\ \hline
                \multicolumn{8}{|c|}{AL\_SOP}                                                                                                                                                                              \\ \hline
        FF48-10y  &                      2.32 &               15\%  [100\%] &                     0  [194] &                 31  [480] &                          7 [10] &                                8 [48] & \\
        FF48-20y  &                      2.28 &             12.6\%  [100\%] &                     0  [420] &                 55  [960] &                         11 [20] &                                7 [48] & \\
        FF48-30y  &                      4.64 &             16.4\%  [100\%] &                    29  [699] &                113 [1440] &                         14 [30] &                               15 [48] & \\
        FF100-10y &                      2.91 &             12.9\%  [100\%] &                    18  [488] &                107  [960] &                          9 [10] &                               17 [96] & \\
        FF100-20y &                      8.95 &             17.8\%  [100\%] &                    89  [560] &                307 [1920] &                         17 [20] &                               45 [96] & \\
        FF100-30y &                      7.07 &              7.8\%  [100\%] &                    59 [1465] &                201 [2880] &                         18 [30] &                               18 [96] & \\
        ES50-Q    &                      0.85 &              100\%  [100\%] &                     0  [  0] &                698 [1100] &                         18 [22] &                               50 [50] & \\
        ES50-A    &                      2.00 &             45.3\%  [100\%] &                     0  [124] &                 35  [300] &                          3  [6] &                               21 [50] & \\ \hline
        
    \end{tabular}
    } 
\end{table}

The results of the tests are summarized in Tables~\ref{table:execution_time} and~\ref{table:solution_quality_comparison}. In Table~\ref{table:execution_time} we report, for each problem and each of the four algorithms, the number of
outer iterations and the execution time in seconds. The number of outer iterations shows that SBSA-LSA performed a further
(final) subspace-acceleration step only for the three problems FF100-20y, ES50-Q and ES50-A, without a practical increase of the execution time.
We see that the SBSA versions of the split Bregman algorithm are able to outperform SB
for all the test problems. The reduction of the total time obtained with SBSA and SBSA-LSA varies between $9 \%$ for FF100-30y and $90 \% $ for FF48-30y.
We note that the cost per iteration of AL\_SOP is far smaller than the one of the other algorithms; however, the proposed accelerated method outperforms AL\_SOP in terms of time on problems
FF48-20y, FF48-30y, FF100-20y and FF100-30y. In particular, for FF100-30y AL\_SOP is not able to converge in 25000 iterations,
corresponding to more that 360 seconds. AL\_SOP requires about the same execution time as SBSA for FF48-10y, while it is much faster
for ES50-Q and ES50-A; however, as we will see later, the quality of the solutions of the ES50 problems computed by AL\_SOP is worse.

In Table~\ref{table:solution_quality_comparison} we report the values of the quality metrics described in Section~\ref{sec:tests_metrics} for the portfolios obtained by using the four algorithms. These metrics are computed before and after thresholding the solution with $ \epsilon_1=\epsilon_2=10^{-4} $ (see~\eqref{active} and~\eqref{eq:matrix_V}). The values before thresholding are in brackets. For each algorithm we report a single value for the ratio, since it is not practically affected by thresholding (we obtained the same results up to the fourth or fifth significant digit). The table shows that the portfolios produced by SBSA, SBSA-LSA and SB are equivalent in financial terms, since the corresponding thresholded solutions produce the same ratios, numbers of short positions, densities and transaction costs.
The densities, transaction costs and numbers of shorts obtained before thresholding give further information on the quality of the optimal solution found by 
the algorithms. The additional subspace-acceleration step performed by SBSA-LSA on the ESQ50 problems allows us to obtain solutions with slightly smaller densities and smaller transaction costs.
Inspection of the non-thresholded solutions of SBSA-LSA and SB shows that in general our subspace-accelerated algorithm is able to compute solutions comparable with those of SB in terms of objective function values. On the other hand, the non-thresholded solutions obtained by SBSA-LSA may have slightly smaller densities or transaction costs.
By looking at the thresholded solutions obtained with AL\_SOP we observe that for the FF100 problems they produce portfolios with slightly poorer qualities since they have a little higher densities, shorts and transaction costs. Regarding the ES50 problems, for which AL\_SOP outperformed SBSA and SBSA-LSA in terms of time, we see that the portfolio computed for ES50-A has a smaller ratio and a much greater density and transaction cost as compared with the other methods, while almost all the metrics concerning the portfolio produced for ES50-Q are worse than
those obtained with the other algorithms. In particular, for ES50-Q the ratio is smaller than 1 and hence the computed portfolio it is not able to satisfy the financial requirements.

In our opinion, the results suggest that the proposed split Bregman method with subspace acceleration is not only efficient in terms of computational cost, but is also better than the SB and AL\_SOP methods in enforcing structured sparsity in the solution, especially when no thresholding is applied. This behavior seems to depend on the backtracking projected line search performed at each acceleration step, which allows us to set variables exactly to zero.

\section{Conclusions\label{sec:conclusions}}
A Split Bregman method with Subspace Acceleration (SBSA) has been proposed for sparse data recovery with joint $\ell_1$-type regularizers. The acceleration technique, inspired by orthant-based methods, consists in replacing $\ell_1$-regularized subproblems at certain iterations with smooth unconstrained optimization problems over orthant faces identified by zero variables. These smooth problems can be solved by fast methods. Suitable optimality measures are used to decide whether to perform subspace acceleration. Numerical experiments show that SBSA is effective in solving multi-period portfolio optimization problems and outperforms the Split Bregman method and the Accelerated ADMM algorithm proposed in~\cite{Chen:2011jcam} in terms of computational time and quality of the solution.

Future work will focus on the solution of problems where the function $f$ in~\eqref{TVl1reg_problem} is not quadratic, such as those arising in some classification tasks on fMRI data~\cite{Dohmatob:2014}.

\section*{Acknowledgments}
The authors thanks the reviewers for their useful comments, which helped them improve this article.
Marco Viola also thanks Daniel Robinson for useful discussions about orthant-based methods for $\ell_1$-regularized optimization problems.

\bibliographystyle{siam}
\bibliography{Biblio_TVl1}
	
\end{document}